\newcommand{\R}{\mathbb{R}}
\newcommand{\Z}{\mathbb{Z}}
\newcommand{\C}{\mathbb{C}}
\newcommand{\N}{\mathbb{N}}
\newcommand{\co}{\colon\thinspace}
\theoremstyle{plain}
\newtheorem{theorem}{Theorem}[section]
\newtheorem{corollary}[theorem]{Corollary}
\newtheorem{proposition}[theorem]{Proposition}
\theoremstyle{definition}
\newtheorem{definition}[theorem]{Definition}
\newtheorem{remark}[theorem]{Remark}
\title{Generic immersions and totally real embeddings}
\author{Naohiko Kasuya \and Masamichi Takase}
\address{Naohiko Kasuya: 
Department of Mathematics, Kyoto Sangyo University,
Kamigamo-Motoyama, Kita-ku, Kyoto, 603-8555, Japan.}
\email{nkasuya@cc.kyoto-su.ac.jp}%
\address{Masamichi Takase: 
Faculty of Science and Technology,
Seikei University, 3-3-1 Kichijoji-kitamachi, Musashino,
Tokyo 180-8633, Japan.} 
\email{mtakase@st.seikei.ac.jp}%
\date{}
\dedicatory{Dedicated to Professor Takashi Nishimura on the occasion of his 60th birthday}
\keywords{CR-regular, totally real, immersion, embedding, semi-characteristic, Stiefel--Whitney number, de Rham invariant}
\subjclass[2010]{Primary 
32V40, 
53C40; 
Secondary 57R20, 
57R40, 
57R42. 
}
\begin{document}\sloppy
\maketitle
\begin{abstract}
We show that, for a closed orientable $n$-manifold, with $n$ 
not congruent to $3$ modulo $4$, 
the existence of a CR-regular embedding into complex $(n-1)$-space 
ensures the existence of a totally real embedding into complex $n$-space. 
This implies that a closed orientable $(4k+1)$-manifold 
with non-vanishing Kervaire semi-characteristic possesses 
no CR-regular embedding into complex $4k$-space. 
We also pay special attention to the cases 
of CR-regular embeddings of spheres and 
of simply-connected $5$-manifolds. 
\end{abstract}
\section{Introduction}\label{sect:introduction}
Let $f\co M^n \to \mathbb{C}^q$ be a smooth immersion 
of a $n$-dimensional manifold into $\mathbb{C}^q$. 
Then we see that, for a point $x\in M^n$ and the standard 
complex structure $J$ on $\mathbb{C}^q$, 
\[
\mathrm{dim}_\mathbb{C}(df_x(T_xM^n)\cap Jdf_x(T_xM^n))\geqq n-q. 
\]
The point $x$ is said to be \textit{CR-regular} if the equality holds 
(and \textit{CR-singular} otherwise) in the above inequality. 
A CR-regular point (resp. a CR-singular point) is called 
\textit{a $\R\C$-regular point} (resp. \textit{a $\R\C$-singular point}) 
in \cite{MR0314066}. 
The immersion $f$ is said to be \textit{a CR-regular immersion} if 
it has no CR-singular point. 
A CR-regular immersion $f\co M^n \to \mathbb{C}^q$ 
is also called \textit{a generic immersion} 
(e.\,g., see Jacobowitz and Landweber \cite{MR2304588}), 
and when $n=q$ 
it has yet another name, \textit{a totally real immersion}. 

We will work in the smooth category; throughout the paper 
all manifolds and immersions are supposed to be differentiable 
of class $C^\infty$. 
Manifolds are further supposed to be connected. 

The following is our main theorem. 

\begin{theorem}\label{thm:main}
Let $M^n$ be a closed orientable $n$-manifold with $n>3$. 
Then, 
\begin{enumerate}[(a)]
\item
when $n\equiv0\bmod2$, 
$M^n$ admits a totally real embedding into $\C^n$ 
if and only if it admits a CR-regular immersion into $\C^{n-1}$; 
\item
when $n\equiv1\bmod4$ and $w_2w_{n-2}[M^n]=0$, 
$M^n$ admits a totally real embedding into $\C^n$ 
if and only if it admits a CR-regular immersion into $\C^{n-1}$; 
\item
when $n\equiv1\bmod4$ and $w_2w_{n-2}[M^n]\ne0$, 
if $M^n$ admits a totally real embedding into $\C^n$ then 
it does not admit a CR-regular immersion into $\C^{n-1}$. 
\end{enumerate}
\end{theorem}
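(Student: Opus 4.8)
The plan is to translate both conditions into bundle data over $M^n$ via the relevant h-principles, and then to isolate the single $\Z/2$ self-intersection invariant that separates totally real embeddings from totally real immersions. First I would record the two translations. By Gromov's h-principle, a totally real immersion $M^n\to\C^n$ exists if and only if the complexified tangent bundle $TM\otimes_\R\C$ is trivial as a complex bundle. For CR-regular immersions into $\C^{n-1}$ there is a companion statement (in the spirit of Gromov and of Jacobowitz--Landweber): writing $L\subset TM$ for the real rank-$2$ field of maximal complex tangents, an immersion $g\co M^n\to\C^{n-1}$ is CR-regular exactly when $dg(L)$ is $J$-invariant of complex dimension $1$, and a short computation with the complex-linear extension $dg\otimes_\R\C$ yields the isomorphism $TM\otimes_\R\C\cong\overline L\oplus\underline{\C}^{\,n-1}$, where $\overline L$ is $L$ with the conjugate complex structure. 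Comparing first Chern classes gives $c_1(\overline L)=c_1(TM\otimes_\R\C)$, so this summand is intrinsic to $M$; since $TM\otimes_\R\C$ is a complexification, $c_1$ is $2$-torsion and reduces mod $2$ to $w_1(TM)^2=0$.

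Next I would settle the bundle-level equivalence underlying (a) and (b). From the displayed isomorphism, $TM\otimes_\R\C$ is trivial if and only if $\overline L$ is trivial, i.e. if and only if $c_1(TM\otimes_\R\C)=0$; in the stated degrees this $2$-torsion obstruction is killed by the hypotheses of (a) and (b), so a CR-regular immersion and a totally real immersion carry equivalent underlying formal data. Writing $TM=L\oplus E$ and using that the immersion gives $TM\oplus\nu_g\cong\underline\R^{\,2n-2}$ with $\nu_g\cong E$, I also obtain $L\oplus 2E\cong\underline\R^{\,2n-2}$; taking Stiefel--Whitney classes forces $w_2(L)=0$, hence $w(L)=1$ and $w(TM)^2=1$. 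This last relation is a necessary condition shared by both the CR-regular and the totally real side, and it will be used below.

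The crux, and the point where (c) diverges, is the passage from immersion to embedding. Upgrading a totally real immersion $M^n\to\C^n$ to a totally real embedding is obstructed by a mod-$2$ double-point number, which is the Smale--Hirsch difference class valued in $\pi_n(V_{2n,n})\cong\Z/2$ (for odd $n>1$) evaluated on the fundamental class. The plan is to compute this number for the totally real immersion $\widetilde g\co M^n\to\C^n$ canonically produced from a CR-regular immersion $g\co M^n\to\C^{n-1}\subset\C^n$ (using the extra coordinate to rotate the complex tangent line of $g$ into a totally real position), and to show it equals $w_2w_{n-2}[M^n]$. Granting this, (c) follows by contradiction: if $M^n$ admitted both a CR-regular immersion into $\C^{n-1}$ and a totally real embedding into $\C^n$, then the associated $\widetilde g$ would have double-point number $w_2w_{n-2}[M^n]\neq0$, while the embedding realizes the $0$ class in the same regular-homotopy comparison, forcing $w_2w_{n-2}[M^n]=0$.

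The hard part will be exactly this last computation: pinning down the canonical regular-homotopy class of $\widetilde g$ and evaluating its double-point invariant as the characteristic number $w_2w_{n-2}[M^n]$. I expect it to proceed by identifying the self-intersection with a normal characteristic class and then invoking a Lusztig--Milnor--Peterson-type formula expressing $w_2w_{n-2}[M^n]$ through the Kervaire semi-characteristic (equivalently the de Rham invariant); this is precisely the bridge needed for the corollary stated in the abstract.
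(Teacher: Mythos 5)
There is a genuine gap, and it sits exactly where your second paragraph asserts that ``a CR-regular immersion and a totally real immersion carry equivalent underlying formal data.'' That claim is false. The Jacobowitz--Landweber $h$-principle requires a splitting $\C TM^n=A\oplus B$ with $A$ trivial of rank $n-1$ and $B$ a complex line bundle satisfying $B\cap\overline{B}=\{0\}$; such a $B$ forces the existence of an oriented $2$-plane field on $M^n$ (the real part of $B\oplus\overline{B}$ sits inside $TM^n$), and the paper's Proposition~\ref{prop:imm} shows it moreover has trivial Euler class. This is strictly stronger than triviality of $\C TM^n$: for example $\C TS^5$ is trivial, so $S^5$ admits a totally real immersion into $\C^5$, yet $S^5$ admits no CR-regular immersion into $\C^4$ (it has no oriented $2$-plane field; equivalently $\hat\chi_{\R}(S^5)=1$). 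Consequently your proposal contains no argument at all for the implication ``totally real embedding $\Rightarrow$ CR-regular immersion into $\C^{n-1}$'' in parts (a) and (b). The paper fills precisely this hole with Proposition~\ref{prop:imm} together with the Atiyah--Dupont/Thomas theorem (Theorem~\ref{thm:AD}), which converts the $2$-plane field condition into $\chi(M^n)=0$ for even $n$ and $\hat\chi_{\R}(M^n)=0$ for $n\equiv1\bmod4$; the real semi-characteristic, which never appears in your proposal, is the quantity that the de Rham invariant $w_2w_{n-2}[M^n]$ compares against the Kervaire semi-characteristic via Lusztig--Milnor--Peterson.

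The second gap is in your argument for (c). You compare the double-point invariant of the totally real immersion $\widetilde g$ manufactured from the CR-regular immersion with that of the given totally real embedding, but these two immersions need not be regularly homotopic, and for odd $n$ the mod $2$ double-point number is an invariant of the regular homotopy class, not of $M^n$; so a nonzero invariant for $\widetilde g$ does not contradict the existence of an embedding in some other totally real class. To get a contradiction you would need that \emph{all} totally real immersions of $M^n$ into $\C^n$, $n\equiv1\bmod4$, share the same double-point invariant, equal to $\hat\chi_{\Z/2\Z}(M^n)$ --- but that invariance is exactly the hard content of Audin's theorem (Theorem~\ref{thm:audin}), and identifying the result with $w_2w_{n-2}[M^n]$ further requires $\hat\chi_{\R}(M^n)=0$ (Atiyah--Dupont again) plus the Lusztig--Milnor--Peterson formula. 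So the step you defer as ``the hard part'' is not a pending computation: filling it in forces you to re-import the three black boxes (Audin; Atiyah--Dupont/Thomas; Lusztig--Milnor--Peterson) that the paper's proof simply cites and splices together at the level of characteristic numbers and semi-characteristics, with no double-point theory needed anywhere.
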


We will see later that the condition 
$w_2w_{n-2}[M^n]=0$ (vanishing de Rham invariant) 
above 
can be replaced with $\bar{w}_2\bar{w}_{n-2}[M^n]=0$ 
(see Remark~\ref{rem:normal}). 
Furthermore, the normal bundle of an embedding of a closed 
orientable $n$-manifold into a Euclidean space necessarily 
has trivial Euler class 
(see \cite[Corollary~11.4]{MR0440554}). 
Hence a closed 
orientable $n$-manifold $M^n$ embeddable in $\R^{2n-2}$ 
must have trivial $\bar{w}_{n-2}(M^n)$. 
Therefore, we have the following. 

\begin{corollary}\label{cor:main}
If a closed orientable $n$-manifold, with $n\not\equiv3\bmod4$, 
admits a CR-regular embedding into $\C^{n-1}$, 
then it admits a totally real embedding into $\C^n$. 
\end{corollary}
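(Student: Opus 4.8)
The plan is to deduce the corollary from Theorem~\ref{thm:main} by a case analysis on $n$ modulo $4$, using the hypothesis that the given CR-regular map is an \emph{embedding} (not merely an immersion) to control the normal Stiefel--Whitney classes. Since $n\not\equiv3\bmod4$, there are exactly two families of cases to treat: $n$ even and $n\equiv1\bmod4$, which are precisely those covered by parts (a) and (b) of the theorem.

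First I would observe that a CR-regular embedding $M^n\hookrightarrow\C^{n-1}$ is in particular a CR-regular immersion into $\C^{n-1}$, so the right-hand condition in parts (a) and (b) of Theorem~\ref{thm:main} is automatically satisfied. When $n\equiv0\bmod2$, part (a) then immediately produces a totally real embedding into $\C^n$, with no further hypotheses to verify. This disposes of the even case.

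The case $n\equiv1\bmod4$ requires invoking part (b), whose applicability is conditioned on $w_2w_{n-2}[M^n]=0$, equivalently (by Remark~\ref{rem:normal}) on $\bar w_2\bar w_{n-2}[M^n]=0$. This is where the embedding hypothesis enters. An embedding of $M^n$ into $\C^{n-1}=\R^{2n-2}$ realizes the normal bundle $\nu$ as a rank-$(n-2)$ bundle that is orientable, since $\bar w_1(M^n)=w_1(M^n)=0$ for orientable $M^n$. Its top class $\bar w_{n-2}(M^n)$ is the mod-$2$ reduction of the Euler class $e(\nu)$, and by \cite[Corollary~11.4]{MR0440554} this Euler class vanishes. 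Hence $\bar w_{n-2}(M^n)=0$, which forces $\bar w_2\bar w_{n-2}[M^n]=0$ and places us in the hypotheses of part (b); that part then yields the desired totally real embedding into $\C^n$.

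The argument is essentially formal once the structural theorem is in hand, so I do not anticipate a genuine obstacle. The one point deserving care is that it is embeddability in $\R^{2n-2}$, and not merely immersibility, that guarantees the vanishing of $\bar w_{n-2}$: this is exactly why the corollary demands a CR-regular \emph{embedding} into $\C^{n-1}$, and why part (c) of the theorem—the $n\equiv1\bmod4$ case with $\bar w_2\bar w_{n-2}\ne0$—can never occur under the corollary's hypotheses.
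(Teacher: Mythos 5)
Your proposal is correct and follows essentially the same route as the paper: reduce to Theorem~\ref{thm:main}~(a) for even $n$, and for $n\equiv1\bmod4$ use \cite[Corollary~11.4]{MR0440554} to kill the Euler class of the normal bundle of the embedding in $\R^{2n-2}$, hence $\bar{w}_{n-2}(M^n)=0$, and then invoke Remark~\ref{rem:normal} to pass from $\bar{w}_2\bar{w}_{n-2}[M^n]=0$ to the hypothesis $w_2w_{n-2}[M^n]=0$ of part~(b). This is precisely the argument the authors give in \S\ref{sect:introduction} and after Remark~\ref{rem:normal}.
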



Note that the converse of Corollary~\ref{cor:main} does not 
hold in general (see Remark~\ref{rem:wu}). 


The proof of Theorem~\ref{thm:main} will be given 
in \S\ref{sect:proof}. 
We will discuss CR-regular immersions 
of simply-connected $5$-manifolds into $\C^4$ 
in \S\ref{sect:5dim}, and CR-regular 
embeddings of spheres in more general codimensions
in \S\ref{sect:spheres}. 

\section{Background}
\subsection{Totally real embeddings}
The problem of determining when a manifold admits 
a totally real immersion or embedding 
has been extensively studied by many authors 
including Gromov \cite{MR0420697, MR0413206,MR864505}, 
Forstneri{\v{c}} \cite{MR880125} and 
Audin \cite{MR966952}. 

First, as for totally real immersions, 
the following theorem 
due to Gromov \cite{MR0420697} and Lees \cite{MR0410764} is 
fundamental. 
(see also \cite{MR1909245}). 

\begin{theorem}[Gromov \cite{MR0420697} and Lees \cite{MR0410764}]\label{thm:gromov}
An $n$-manifold 
$M^n$ admits a totally real immersion into $\C^n$
if and only if 
the complexified tangent bundle $\C TM^n$ is trivial. 
\end{theorem}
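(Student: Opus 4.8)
The plan is to recognize this statement as an instance of the h-principle and to split the equivalence into an elementary ``necessity'' part, which is pure linear algebra, and a ``sufficiency'' part, which carries the substantive geometric content. Throughout I would regard a totally real immersion $f\co M^n\to\C^n$ as a solution of an open differential relation $\mathcal{R}\subset J^1(M^n,\C^n)$: a $1$-jet $(x,y,L)$ lies in $\mathcal{R}$ precisely when the linear map $L\co T_xM^n\to\C^n$ is injective with totally real image, i.e. $L(T_xM^n)\cap JL(T_xM^n)=\{0\}$. Since $\dim_\R T_xM^n=n$, such an image is a maximal totally real $n$-plane, so $L(T_xM^n)\oplus JL(T_xM^n)=\C^n$.

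For necessity, suppose $f$ is a totally real immersion. Its differential $df$ is then a fiberwise totally real monomorphism of $TM^n$ into the pullback bundle $f^*T\C^n\cong M^n\times\C^n$ (trivial since $\C^n$ is contractible). Complexifying fiberwise, the assignment $v\otimes(a+bi)\mapsto (a+bJ)\,df(v)$ is a complex-linear isomorphism $\C TM^n\to M^n\times\C^n$, because a totally real monomorphism extends uniquely to a complex isomorphism on the complexification, its invertibility being exactly the total-reality condition $L(T_xM^n)\cap JL(T_xM^n)=\{0\}$. Hence $\C TM^n$ is trivial, which settles one direction.

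For sufficiency I would introduce the notion of a \emph{formal solution} of $\mathcal{R}$: a continuous section of $\mathcal{R}\to M^n$ covering some base map, equivalently (again using contractibility of $\C^n$) a fiberwise totally real monomorphism $F\co TM^n\to M^n\times\C^n$. The key observation is that formal solutions correspond bijectively to trivializations of $\C TM^n$: restriction to the real part sends a complex trivialization $\Phi\co\C TM^n\to M^n\times\C^n$ to the monomorphism $v\mapsto\Phi(v\otimes1)$, whose image is totally real since $J\Phi(v\otimes1)=\Phi(v\otimes i)$ while $(T_xM^n\otimes1)\cap(T_xM^n\otimes i)=\{0\}$; conversely complexification recovers $\Phi$ from $F$. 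Thus a formal solution exists if and only if $\C TM^n$ is trivial. It then remains to invoke the h-principle for $\mathcal{R}$: every formal solution is homotopic to a holonomic one, so a genuine totally real immersion exists whenever a formal solution does.

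The main obstacle is precisely this last step, the validity of the h-principle for totally real immersions. I expect to establish it by checking that $\mathcal{R}$ is open and ample in the coordinate directions and then applying Gromov's convex integration, which yields the h-principle even for the closed manifolds not covered by the classical open-manifold (Hirsch--Gromov--Phillips) theory; alternatively one can phrase totally real immersions as directed immersions into $\C^n$ and quote Gromov's theorem for directed immersions. Everything else---the linear algebra of total reality and the identification of formal solutions with trivializations of $\C TM^n$---is routine once the correspondence with complexification is in place.
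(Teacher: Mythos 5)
The paper does not prove this statement at all: it is quoted as background and attributed to Gromov \cite{MR0420697} and Lees \cite{MR0410764}, with \cite{MR1909245} as a further reference, so there is no internal proof to compare yours against. Judged on its own merits, your outline is correct and is in fact the standard modern route to the Gromov--Lees theorem (essentially the treatment in Eliashberg--Mishachev, which the paper cites next to the statement). Your necessity argument is right: for an injective $L\co T_xM^n\to\C^n$, invertibility of the complexification $v\otimes(a+bi)\mapsto(a+bJ)L(v)$ is exactly the condition $L(T_xM^n)\cap JL(T_xM^n)=\{0\}$, since the image of the complexified map is $L(T_xM^n)+JL(T_xM^n)$. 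The identification of formal solutions with complex trivializations of $\C TM^n$ is also right; the cleanest way to see both it and the one step you leave schematic is the linear-algebra fact that $n$ vectors of $\C^n$ are $\R$-independent with totally real span if and only if they are $\C$-linearly independent. With that reformulation, the ampleness check you defer does go through: fixing the derivative in all but one coordinate direction, the admissible values of the remaining one form either the empty set (if the fixed vectors are already $\C$-dependent) or the complement of a complex subspace of complex codimension one, which is path-connected, of real codimension two, and has convex hull all of $\C^n$. Since Gromov's convex integration yields the full h-principle for open, ample relations over closed manifolds as well as open ones (no Hirsch--Phillips openness hypothesis is needed), your final appeal is legitimate, and the proposal as a whole is a faithful reconstruction of the cited proof rather than a new argument.
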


This is called the $h$-principle for totally real immersions. 
For totally real embeddings, 
Gromov \cite{MR864505} and Forstneri{\v{c}} \cite{MR880125} have proved the following $h$-principle. 

\begin{theorem}[Gromov \cite{MR864505} and Forstneri{\v{c}} \cite{MR880125}]
Let $M^n$ be a closed orientable $n$-manifold with $n\geq 3$. 
Then, $M^n$ admits a totally real embedding into $\C^n$ if and only if 
it admits a totally real immersion into $\C^n$ which is 
regularly homotopic to an embedding. 
\end{theorem}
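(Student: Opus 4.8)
The plan is to treat the two implications separately, with essentially all of the content in the ``if'' direction: a totally real embedding is in particular a totally real immersion, and it is (trivially) regularly homotopic to an embedding, namely to itself. So suppose $f\co M^n\to\C^n$ is a totally real immersion that is regularly homotopic to an embedding $g\co M^n\to\C^n$. First I would bring $f$ into general position. Since $\dim M^n=n$ while the target $\C^n\cong\R^{2n}$ has twice that dimension, a generic immersion has only finitely many transverse double points; and because being totally real is a $C^1$-open condition, a sufficiently $C^1$-small perturbation of $f$ remains totally real. Hence I may assume $f$ is a totally real immersion with finitely many transverse self-intersections.

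Next I would identify the obstruction to upgrading $f$ to an embedding. For a generic immersion of a closed orientable $n$-manifold into $\R^{2n}$ the self-intersections carry signs determined by the orientations (this is where orientability is used), and their algebraic count is a regular homotopy invariant. Since $f$ is regularly homotopic to $g$, which is embedded and therefore has vanishing count, the double points of $f$ cancel algebraically and can be grouped into pairs suited to the Whitney trick.

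For each such pair I would cancel the two intersection points by the Whitney trick. This is precisely where the hypothesis $n\geq3$ enters: one needs an embedded Whitney disk together with enough room to push one sheet across it, which is available exactly when $n\geq3$. The difficulty here — and what I expect to be the main obstacle — is that the Whitney homotopy is not $C^1$-small, so openness of the totally real condition alone no longer keeps the immersion totally real along the move. To handle this I would invoke the $h$-principle for totally real immersions (Theorem~\ref{thm:gromov}) in its relative and parametric form: the totally real condition is a first-order open differential relation to which convex integration applies — this is exactly the mechanism underlying Theorem~\ref{thm:gromov} — so the $h$-principle can be run rel the boundary of a tubular neighborhood of the Whitney disk.

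Concretely, supported near the disk and fixed near its boundary, I would first build a \emph{formal} totally real regular homotopy realizing the double-point cancellation; this is the step where the global triviality of $\C TM^n$ (guaranteed by Theorem~\ref{thm:gromov}, since $f$ is totally real) must be exploited to trivialize the relevant frame data along the disk so that such a formal solution exists. I would then upgrade it to a genuine totally real regular homotopy by the $h$-principle. Carrying out this cancellation once for each pair of double points produces a totally real immersion with empty self-intersection set, that is, a totally real embedding $M^n\hookrightarrow\C^n$, which completes the argument.
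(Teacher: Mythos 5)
This theorem is one the paper quotes from the literature rather than proves, so your proposal can only be judged against the published arguments (Forstneri{\v{c}}'s paper and Gromov's book). Your opening steps are fine: the ``only if'' direction is trivial, genericity plus $C^1$-openness of the totally real condition is correct, and the reduction to cancelling double points via the Whitney trick (using $n\geq 3$) is the right topological skeleton. But the crucial step — keeping the map totally real through the Whitney move by invoking the relative/parametric $h$-principle for totally real \emph{immersions} — has a genuine gap. That $h$-principle is proved by convex integration, which gives only $C^0$-control of the resulting maps: it will upgrade your formal homotopy to a genuine totally real regular homotopy supported near the Whitney disk, but the time-one map it produces is merely a totally real immersion $C^0$-close to the embedded result of the Whitney move, with no control on injectivity. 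In the dimension range at hand ($n$-manifolds in $\R^{2n}$) transverse double points are stable and are easily created by $C^0$-small but $C^1$-large perturbations — exactly the kind convex integration makes — so after each ``cancellation'' you may hold a totally real immersion with just as many double points as before, and your induction never terminates in an embedding. What you would actually need is a relative $h$-principle for totally real \emph{embeddings} near the Whitney disk; but an $h$-principle for directed embeddings is precisely the content of the Gromov--Forstneri{\v{c}} theorem you are proving, so as written the argument is circular at its core.

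The known proofs avoid this trap by deforming the object whose defining property is robust and restoring the fragile one, rather than the other way around. Forstneri{\v{c}} works on the embedding side: one first uses the Whitney trick to produce an honest (not necessarily totally real) embedding regularly homotopic to the given totally real immersion; this embedding has a locus of complex points, the regular homotopy to a totally real immersion shows the formal obstruction to their removal vanishes, and the real analytic/geometric work is a cancellation theorem for complex points of an embedding through an \emph{isotopy} (so embeddedness is never at risk). Gromov's treatment via directed embeddings likewise has to do extra work precisely to preserve injectivity during convex integration; it is not a formal consequence of the immersion $h$-principle. A secondary, repairable error: your signed count of double points is only defined for $n$ even — for $n$ odd (including the case $n\equiv 1\bmod 4$ that this paper actually needs) interchanging the two sheets reverses the sign, and only the mod $2$ count is a regular homotopy invariant, which is still enough to pair up the double points for the Whitney trick.
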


Using these $h$-principles, 
Audin \cite{MR966952} has given a necessary and sufficient condition 
for the existence of totally real embeddings. 
In order to state her results, 
we need the notion of semi-characteristic. 

\begin{definition}[Kervaire {\cite{MR0086302}} and Lusztig--Milnor--Peterson {\cite{MR0246308}}]
Let $F$ be either the field $\R$ or $\Z/2\Z$. 
For a closed orientable $(2r+1)$-manifold $M$, 
\textit{the semi-characteristic of $M$ with respect to a coefficient field $F$} is 
defined to be 
\[
\hat\chi_F(M)=\sum_{i=0}^r\mathrm{rank}\thinspace H_i(M;F)\in\Z/2\Z
\]
taken as an integer modulo $2$. 
We call $\hat\chi_{\R}(M)$ \textit{the real semi-characteristic} 
and $\hat\chi_{\Z/2\Z}(M)$ \textit{the Kervaire semi-characteristic}, respectively.  
\end{definition}

\begin{theorem}[Audin \cite{MR966952}]\label{thm:audin}
A closed orientable $n$-manifold $M^n$ 
admits a totally real embedding into $\C^n$
if and only if 
\begin{enumerate}[(a)]
\item
when $n\equiv0\bmod2$, 
$\C TM^n$ is trivial and 
the Euler characteristic 
$\chi(M^n)$ vanishes;
\item
when $n\equiv1\bmod4$, 
$\C TM^n$ is trivial and 
the Kervaire semi-characteristic 
$\hat\chi_{\mathbb{Z}/2\mathbb{Z}}(M^n)$ vanishes. 
\end{enumerate}
\end{theorem}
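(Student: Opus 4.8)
The plan is to combine the two $h$-principles recorded above with classical self-intersection theory for immersions in the borderline dimension. First I would invoke Theorem~\ref{thm:gromov} to reduce the existence of a totally real \emph{immersion} to the triviality of $\C TM^n$; this already isolates the common necessary condition ``$\C TM^n$ trivial'' appearing in both (a) and (b). Then, by the Gromov--Forstneri\v{c} $h$-principle, a totally real \emph{embedding} exists if and only if some totally real immersion $f\co M^n\to\C^n$ is regularly homotopic to an embedding (through arbitrary immersions, not necessarily totally real ones). Thus the entire problem becomes: among the regular homotopy classes realized by totally real immersions, which ones contain an embedding?

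Since $f\co M^n\to\C^n=\R^{2n}$ has codimension $n\geq 3$, the obstruction is classical. A generic immersion in this borderline dimension is regularly homotopic to an embedding if and only if its algebraic self-intersection invariant vanishes, the excess double points being cancelled in pairs by the Whitney trick (which is exactly what the hypothesis $n\geq 3$ guarantees). The decisive structural observation is that for a totally real immersion the complex structure $J$ carries $df(T_xM^n)$ isomorphically onto the normal space, so that $\nu_f\cong TM^n$ as real vector bundles. I would exploit this to pin down the self-intersection invariant in terms of the topology of $M^n$ alone, which is what produces the numerical conditions in (a) and (b).

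For the even case (a) the argument is then short. When $n$ is even the self-intersection invariant is the integer $I(f)$, and the classical relation $2I(f)=\pm\langle e(\nu_f),[M^n]\rangle$ holds. Since $J$ is a bundle isomorphism we get $\langle e(\nu_f),[M^n]\rangle=\pm\chi(M^n)$, hence $I(f)=\pm\chi(M^n)/2$ for \emph{every} totally real immersion. Consequently a totally real immersion can be regularly homotoped to an embedding precisely when $\chi(M^n)=0$; combined with the triviality of $\C TM^n$ this gives both directions of (a).

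The odd case (b) is where the real work lies, and I expect it to be the main obstacle. When $n\equiv1\bmod4$ one has $\chi(M^n)=0$ automatically, $e(\nu_f)$ is only $2$-torsion, and the self-intersection invariant is now the $\Z/2\Z$-valued double-point number $D_2(f)$; the Whitney trick still removes double points in pairs, so embeddability is equivalent to the vanishing of $D_2(f)$. The crux is to show that $D_2(f)$ is independent of the chosen totally real immersion and equals the Kervaire semi-characteristic $\hat\chi_{\Z/2\Z}(M^n)$. I would compute $D_2(f)$ from the framing data encoded by total reality, namely the trivialization $\C TM^n=TM^n\oplus JTM^n$, reducing it to a characteristic-number expression, and then identify that expression with $\hat\chi_{\Z/2\Z}(M^n)$ via the Lusztig--Milnor--Peterson description of the semi-characteristic and its relation to Wu classes. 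It is precisely at this identification that the hypothesis $n\equiv1\bmod4$ is essential: for $n\equiv3\bmod4$ the relevant $2$-torsion and quadratic-refinement data behave differently, which is exactly why that residue class is absent from the statement.
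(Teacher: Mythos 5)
The paper itself does not prove this statement: it is quoted directly from Audin \cite{MR966952}, so your attempt can only be measured against Audin's original argument. Your outline in fact reproduces her strategy: reduce via the Gromov--Lees and Gromov--Forstneri\v{c} $h$-principles to the question of which regular homotopy classes of totally real immersions contain embeddings, and decide that question with the self-intersection invariant and the Whitney trick, available since the codimension is $n\geq3$. Case (a) as you present it is essentially complete: the classical Whitney/Lashof--Smale relation $2I(f)=\pm\langle e(\nu_f),[M^n]\rangle$, combined with the isomorphism $\nu_f\cong TM^n$ that $J$ forces for a totally real immersion, gives $I(f)=\pm\chi(M^n)/2$ for \emph{every} totally real immersion, and both directions of (a) follow.

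The genuine gap is in case (b), and you have in effect flagged it yourself. The entire content of Audin's theorem in the odd case is the identification $D_2(f)=\hat\chi_{\Z/2\Z}(M^n)$ for every totally real immersion $f$ when $n\equiv1\bmod4$; your text only announces a plan (``I would compute $D_2(f)$ from the framing data \dots and then identify that expression with $\hat\chi_{\Z/2\Z}(M^n)$'') without carrying out either half of it: neither the independence of $D_2(f)$ from the choice of totally real immersion (totally real immersions of $M^n$ can occupy different regular homotopy classes, so this is not automatic), nor the characteristic-class computation tying it to the Kervaire semi-characteristic. Moreover, this step cannot be routine, because exactly the same framing data $\C TM^n=TM^n\oplus JTM^n$ exist when $n\equiv3\bmod4$, where the asserted identification is simply false: $S^3$ admits a totally real embedding into $\C^3$ (Ahern--Rudin), so some totally real immersion has $D_2=0$, while $\hat\chi_{\Z/2\Z}(S^3)=1$. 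Any honest proof must therefore isolate precisely where the hypothesis $n\equiv1\bmod4$ enters the double-point computation; that is the hard part of Audin's paper, and it is missing from your proposal.
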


Note that Audin \cite{MR966952} has given some 
partial results also in the case where $n\equiv3\bmod8$.

\subsection{CR-regular immersions and embeddings}
CR-regular immersions and 
embeddings also have been 
studied by many authors from various viewpoints 
(see e.\,g., Cartan \cite{MR1553196}, 
Tanaka \cite{MR0145555,MR0221418}, 
Wells \cite{MR0241690,MR0237823},  
Lai \cite{MR0314066},
Jacobowitz and Landweber \cite{MR2304588}, 
Ho, Jacobowitz and Landweber \cite{MR2946116}, 
Slapar \cite{MR3345510}, 
Torres \cite{MR3503716, MR3777134}). 
Our previous paper \cite{MR3739200} sheds 
some new light on a relation between CR-singularity 
and differential topology. 

In the recent paper \cite{MR2304588} by 
Jacobowitz and Landweber the following generalisation of 
Gromov--Lees Theorem (Theorem~\ref{thm:gromov}) has been shown. 

\begin{theorem}[Jacobowitz and Landweber \cite{MR2304588}]~\label{thm:JL}
Let $n<2q$. 
Then, an $n$-manifold $M^n$ admits a CR-regular immersion into $\C^q$ 
if and only if the complexified tangent bundle $\C TM^n$ splits as 
\[
\C TM^n=A\oplus B, 
\]
where $A$ is the trivial complex vector bundle of rank $q$, 
and $B$ is a complex vector bundle of rank $n-q$ with $B\cap \overline{B}=\left\{0 \right\}$. 
\end{theorem}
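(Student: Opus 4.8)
The plan is to prove both implications by separating the linear-algebraic content from the soft topology. First I would reformulate CR-regularity pointwise: writing $V_x=df_x(T_xM)\subset\C^q$, the defining inequality becomes an equality exactly when $V_x+JV_x=\C^q$. Indeed, $V_x\cap JV_x$ is $J$-invariant, so $\dim_\R(V_x+JV_x)=2n-2\dim_\C(V_x\cap JV_x)$, and this equals $2q$ precisely when $\dim_\C(V_x\cap JV_x)=n-q$. Thus a CR-regular immersion is exactly an immersion whose differential is, fiberwise, a CR-regular monomorphism $T_xM\to\C^q$ (an open condition on $df$), and the whole problem reduces to understanding these monomorphisms up to the relevant homotopy theory.

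For the forward direction, given a CR-regular immersion $f$ I would complexify its differential. Identifying $f^{*}T\C^q$ with the trivial bundle $\underline{\C^q}=:E$, complexification gives $df^{\C}\co\C TM\to E\otimes_\R\C=E^{1,0}\oplus E^{0,1}$, and I set $\alpha:=\pi^{1,0}\circ df^{\C}\co\C TM\to E^{1,0}\cong E$, a $\C$-linear bundle map into the trivial rank-$q$ bundle. A direct fiber computation identifies $\ker\alpha_x=\{u_1+iu_2: df_x u_1=-J\,df_x u_2\}$, which has complex rank $n-q$ thanks to $\dim_\C(V_x\cap JV_x)=n-q$; CR-regularity ($V_x+JV_x=\C^q$) is then exactly the statement that $\alpha$ is fiberwise surjective. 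Hence $\C TM=A\oplus B$ with $B:=\ker\alpha$ of rank $n-q$ and $A$ any complement, which $\alpha$ carries isomorphically onto $E$ and is therefore trivial of rank $q$. Finally $\overline{B}=\{u_1+iu_2: df_x u_1=J\,df_x u_2\}$, so $u\in B\cap\overline{B}$ forces $J\,df_x u_2=0$, whence $u=0$ by injectivity of $df_x$; thus $B\cap\overline{B}=\{0\}$.

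The converse reverses this at the formal level and then invokes an h-principle. From a splitting $\C TM=A\oplus B$ as in the statement, let $\alpha$ be the projection onto $A$ along $B$ (a $\C$-linear map, since both summands are complex) followed by a trivialization $A\cong E$, and set $F:=\alpha\restr TM\co TM\to E$, the restriction to the real subbundle $TM\subset\C TM$. The hypothesis $B\cap\overline{B}=\{0\}$ says $B$ contains no nonzero real vector, which makes $F$ injective; surjectivity of $\alpha$ together with $\C TM=TM+i\,TM$ gives $F(T_xM)+J\,F(T_xM)=E$. Hence $F$ is a formal, bundle-level CR-regular monomorphism. It remains to integrate $F$ to the differential of a genuine CR-regular immersion, which I would obtain from Gromov's h-principle for the open, $\mathrm{Diff}$-invariant relation cut out by CR-regular monomorphisms, the range $n<2q$ being exactly what generalizes the Gromov--Lees theorem (Theorem~\ref{thm:gromov}).

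The main obstacle is this last step: establishing the h-principle for the CR-regular immersion relation on a closed manifold. The linear algebra matching formal solutions with the stated splitting is routine and entirely fiberwise, but passing from a formal solution to an honest immersion requires verifying ampleness of the relation (so that convex integration applies), or else reducing it to the Hirsch--Smale/Gromov--Lees framework; the role of the dimension restriction $n<2q$ is precisely to guarantee that this works.
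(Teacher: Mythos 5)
This statement is not proved in the paper at all: it is quoted verbatim from Jacobowitz--Landweber \cite{MR2304588}. The paper only sketches the ``only if'' direction (by pointing to the argument in the proof of Proposition~\ref{thm:kasuya}) and explicitly remarks that ``the converse is the essence of Theorem~\ref{thm:JL}'', deferring entirely to the citation. Your forward direction is correct and amounts to the same linear algebra as the paper's sketch, just packaged differently: your $B=\ker\bigl(\pi^{1,0}\circ df^{\C}\bigr)$ is exactly the complexification (up to conjugation) of the CR tangent bundle $\mathcal{L}\subset TM^n$ that the paper extracts, whose fibre maps onto $df_x(T_xM^n)\cap Jdf_x(T_xM^n)$, and your complement $A$ corresponds to the paper's identification of the remaining summand with the trivialised pullback $f^*T\C^q$. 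Your fibrewise computations (rank of the kernel, surjectivity of $\alpha$ as a reformulation of CR-regularity, and $B\cap\overline{B}=\{0\}$ from injectivity of $df_x$) are all sound.

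The genuine gap is in the converse, and it is precisely the part that constitutes the theorem. The tool you invoke --- ``Gromov's h-principle for the open, $\mathrm{Diff}$-invariant relation cut out by CR-regular monomorphisms'' --- does not suffice: Gromov's theorem that open, $\mathrm{Diff}$-invariant relations satisfy the h-principle is valid only for \emph{open} manifolds, whereas the manifolds of interest here (and throughout the paper) are closed. On closed manifolds openness and $\mathrm{Diff}$-invariance alone imply nothing; one must either verify that the relation is \emph{ample} so that convex integration applies, or reduce to Gromov's h-principle for $A$-directed immersions with $A$ open and ample --- and this verification, in which the hypothesis $n<2q$ plays its essential role, is exactly what Jacobowitz and Landweber do in \cite{MR2304588}. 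Your construction of the formal solution $F=\alpha\restr TM$ from the splitting is correct (injectivity from $B\cap\overline{B}=\{0\}$ and the spanning condition $F(T_xM)+JF(T_xM)=\C^q$ both check out), but passing from this formal solution to an honest CR-regular immersion is the whole difficulty. You flag this honestly as ``the main obstacle'', but flagging it does not fill it: as written, your proposal establishes only the easy (``only if'') half of the equivalence.
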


The ``only if'' part of Theorem~\ref{thm:JL} is straightforward: 
if $M^n$ admits a CR-regular immersion into $\C^q$ then we can find 
inside $TM^n$ 
a suitable complex bundle of rank $n-q$, whose 
fibre at $x\in M^n$ is mapped onto 
$df_x(T_xM^n)\cap Jdf_x(T_xM^n)$ via $df_x$ 
(see the argument in the proof of Proposition~\ref{thm:kasuya}). 
The converse is the essence of Theorem~\ref{thm:JL}, which 
clarifies the condition for the existence of CR-regular immersions. 
The next goal is to write down the condition for the existence of CR-regular embeddings. 
In particular, the case of CR-regular embeddings of $n$-manifolds $M^n$ into $\C^{n-1}$, $n\geq3$, 
seems interesting and has been extensively studied in recent years. 

When $n=3$, each immersion or embedding of $3$-manifolds in $\C^2$ is 
CR-regular. 
When $n=4$, Slapar \cite{MR3345510} has shown that 
a closed orientable $4$-manifold $M^4$ admits a CR-regular immersion in $\C^3$ 
if and only if the first Pontryagin class $p_1(M^4)$ and the Euler characteristic $\chi (M^4)$ vanish, 
and admits a CR-regular embedding if and only if in addition the second Stiefel--Whitney class $w_2(M^4)$ vanishes. 

In the $5$-dimensional case, Jacobowitz and Landweber 
\cite[p.\,163]{MR2304588} posed a problem: 
``\textit{
one could ask if every $M^5$ which
admits an embedding into $\C^4$ and satisfies the necessary condition for generic
immersions given by Theorem 1.2 has a generic embedding into $\C^4$ or if the
Kervaire semi-characteristic is still restricted.}''
In the present paper we give an answer to this question. 
Our main theorem, in fact, claims that vanishing Kervaire semi-characteristic 
is a necessary condition for the existence of 
``a generic embedding'' into $\C^{4k}$ for a closed 
orientable $(4k+1)$-manifold 
(see Corollary~\ref{cor:kervaire}). 

For the $5$-dimensional sphere $S^5$, 
Elgindi has already proved in \cite{1607.02384} the non-existence of 
a CR-regular embedding of $S^5$ into $\C^4$. 
We show that the proof found in \cite{1607.02384} can be 
applied to prove the following. 

\begin{proposition}\label{thm:kasuya}
If a closed orientable $n$-manifold $M^n$ admits a CR-regular 
immersion into $\C^{n-1}$ with trivial normal bundle, 
then $M^n$ is parallelisable. 
\end{proposition}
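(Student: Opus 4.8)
The plan is to work directly with the bundle monomorphism $df\co TM^n\to\underline{\C}^{n-1}$ (the trivial complex bundle $\C^{n-1}$), which identifies $TM^n$ with its image $df(TM^n)$; it then suffices to trivialise this image. First I would isolate the \emph{holomorphic tangent bundle} $H$, the rank-one complex subbundle of $\underline{\C}^{n-1}$ whose fibre over $x$ is $df_x(T_xM^n)\cap J\,df_x(T_xM^n)$ (this is precisely the bundle furnished by the ``only if'' part of Theorem~\ref{thm:JL}, with $n-q=1$). Choosing a Hermitian metric and writing $T$ for the orthogonal complement of $H$ inside $df(TM^n)$, one obtains a real splitting $df(TM^n)=H\oplus T$ in which $H$ is a complex line bundle and $T$ has real rank $n-2$. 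The goal thereby reduces to showing that both $H$ and $T$ are trivial.

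Next I would identify the totally real summand $T$ with the normal bundle $\nu$ of the immersion. CR-regularity gives $df(TM^n)+J\,df(TM^n)=\underline{\C}^{n-1}$, and a short fibrewise check shows $df(TM^n)\cap JT=\{0\}$: if $Jt\in df(TM^n)$ for $t\in T\subset df(TM^n)$, then $Jt\in df(TM^n)\cap J\,df(TM^n)=H$, forcing $t\in H\cap T=\{0\}$. Since the ranks add up to $2(n-1)$, the subbundle $JT$ is a complement of $df(TM^n)$ in $\underline{\C}^{n-1}$ and is therefore isomorphic to $\nu$; as $J$ gives a real isomorphism $T\cong JT$, I get $\nu\cong T$. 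Hence the hypothesis that $\nu$ is trivial forces $T$ to be trivial.

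The crux is to deduce that the complex line bundle $H$ is trivial as well. Here I would exploit that $T$ maps onto a \emph{maximal} totally real subbundle of the quotient $\underline{\C}^{n-1}/H$: the same fibrewise argument (using that the only $J$-invariant part of $df(TM^n)$ is $H$) shows that the image of $T$ meets its $J$-translate only in $0$, while its real rank $n-2$ equals the complex rank of the quotient, so $\underline{\C}^{n-1}/H\cong\C\otimes_\R T$, the complexification of $T$. With $T$ trivial this quotient is trivial; and since a complex line subbundle of a trivial bundle splits off, $H\oplus(\underline{\C}^{n-1}/H)\cong\underline{\C}^{n-1}$ yields $c_1(H)=0$, whence $H$ is trivial. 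Combining, $TM^n\cong df(TM^n)=H\oplus T$ is trivial, i.e.\ $M^n$ is parallelisable.

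The main obstacle is precisely this last passage: translating the hypothesis on the \emph{normal} bundle into the vanishing of $c_1(H)$ for the \emph{holomorphic tangent} line bundle. The two geometric inputs that make it work—the realisation of $\nu$ as a $J$-translate of the totally real summand $T$, and the identification $\underline{\C}^{n-1}/H\cong\C\otimes_\R T$—are what tie the normal data to the tangential data; once they are in place the conclusion is immediate, and everything else is routine linear algebra applied fibrewise.
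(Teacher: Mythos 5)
Your proof is correct, and its skeleton is the same as the paper's: you split off the complex line bundle of holomorphic tangents ($H$; the paper's $\mathcal{L}$), identify the totally real complement $T$ (the paper's $\mathcal{M}$) with the normal bundle via $J$, arrive at the complex splitting $H\oplus(\C\otimes_\R T)\cong\underline{\C}^{n-1}$ (the paper's $\mathcal{L}\oplus(\mathcal{M}\otimes\C)\cong f^*T\C^{n-1}$), and trivialise $H$ by killing $c_1(H)$. The one genuine divergence is in how the vanishing of $c_1$ of the rank-$(n-2)$ complex summand is obtained. You use the hypothesis: $T\cong\nu$ is trivial, hence $\C\otimes_\R T$ is trivial and $c_1(H)=-c_1(\C\otimes_\R T)=0$. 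The paper instead observes that $c_1(\mathcal{M}\otimes\C)=0$ follows already from $w_1(\mathcal{M})=w_1(M^n)=0$, i.e.\ from orientability alone (\cite[Problem~15-D]{MR0440554}), so that the line bundle $\mathcal{L}$ is trivial for \emph{every} CR-regular immersion of a closed orientable $M^n$ into $\C^{n-1}$, whether or not the normal bundle is trivial. That hypothesis-free form of the step is exactly what the paper reuses in the proof of Proposition~\ref{prop:imm}, where no normal-bundle assumption is available; your version suffices for the present statement but would not transfer there. In compensation, your route is marginally more self-contained (it avoids the characteristic-class fact about complexifications of real bundles), and your fibrewise verifications that $JT$ is a complement of $df(TM^n)$ and that $\underline{\C}^{n-1}/H\cong\C\otimes_\R T$ make precise two identifications ($\mathcal{M}\cong\mathcal{N}$ and $\mathcal{M}\oplus\mathcal{N}\cong\mathcal{M}\otimes\C$) that the paper only asserts.
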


\begin{proof}
Suppose that there exists 
a CR-regular immersion of $M^n$ into $\C^{n-1}$ 
with normal bundle $\mathcal{N}$. 
Then, in each $T_xM^n$, 
we can take such a complex line that is mapped 
onto  $df_x(T_xM^n)\cap Jdf_x(T_xM^n)$ through $df_x$. 
Thus we have a complex line bundle $\mathcal{L}$ over $M^n$ 
and a decomposition 
\[
TM^n\cong\mathcal{L}_{\mathbb{R}}\oplus\mathcal{M}, 
\]
where $\mathcal{L}_{\mathbb{R}}$ is the underlying real bundle 
of $\mathcal{L}$ and $\mathcal{M}$ is the complementary bundle. 
Therefore we have 
\[
\mathcal{L}_{\mathbb{R}}\oplus\mathcal{M}\oplus\mathcal{N}
\cong f^*T\R^{2n-2}=f^*T\C^{n-1}. 
\]
Since $\mathcal{M}$ and $\mathcal{N}$, viewed in $T\C^{n-1}$, 
should be mapped isomorphically onto each other via $J$, 
we see that $\mathcal{M}\cong\mathcal{N}$, 
more precisely 
\[
\mathcal{M}\oplus\mathcal{N}\cong\mathcal{M}\otimes\C. 
\]
Hence we have the bundle isomorphism  
\[
\mathcal{L}\oplus(\mathcal{M}\otimes\C)\cong f^*T\C^{n-1}. 
\]
It follows from $w_1(\mathcal{M})=w_1(M)=0$ that 
$c_1(\mathcal{M}\otimes\C)=0$ (see \cite[Problem~15-D]{MR0440554}). 
Therefore the complex line bundle 
$\mathcal{L}$ has trivial first Chern class 
$c_1(\mathcal{L})$ and hence is trivial. 

Now since $\mathcal{N}$ is assumed to be trivial, 
$\mathcal{M}$ is trivial. 
Consequently, $TM^n\cong\mathcal{L}_{\mathbb{R}}\oplus\mathcal{M}$ 
is trivial, that is, $M^n$ is parallelisable. 
\end{proof}


\begin{remark}
Torres \cite{MR3503716} has studied CR-regular immersions and embeddings of orientable $6$-manifolds
into $\C^4$. Another interesting aspect proved in \cite{MR3777134}, based on Slapar \cite{MR3096869}
and Di Scala--Kasuya--Zuddas \cite{MR3453880}, is that an almost-complex $2n$-manifold $M^{2n}$
admits a pseudo-holomorphic embedding into $\R^{2n+2}$ endowed with a suitable almost-complex structure 
if and only if it admits a CR-regular embedding into $\C^{n+1}$.
\end{remark}

\section{The proof of Theorem~\ref{thm:main} and discussion}\label{sect:proof}
We first prove the following observation. 
%
\begin{proposition}\label{prop:imm}
A closed orientable 
$n$-manifold $M^n$ with $n\geq 3$ admits 
a CR-regular immersion into $\C^{n-1}$ if and only if 
the complexified tangent bundle $\C TM^n$ is trivial 
and $M^n$ has an oriented $2$-plane field 
with trivial Euler class. 
\end{proposition}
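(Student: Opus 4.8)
The plan is to read the statement entirely through the Jacobowitz--Landweber criterion (Theorem~\ref{thm:JL}) with $q=n-1$, which applies because $n<2(n-1)$ for $n\geq3$. That criterion identifies ``admits a CR-regular immersion into $\C^{n-1}$'' with the existence of a splitting $\C TM^n\cong A\oplus B$ in which $A$ is trivial of complex rank $n-1$ and $B$ is a complex line bundle with $B\cap\overline{B}=\{0\}$. The dictionary I would set up first is that such a $B$ is exactly the data of an oriented $2$-plane field: the real bundle underlying $B$ sits in $TM^n$ as an oriented rank-$2$ subbundle $\xi$, the condition $B\cap\overline{B}=\{0\}$ is precisely what makes $B$ the $+i$-eigenspace of $\xi\otimes\C$, and $c_1(B)=e(\xi)$. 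So the whole proposition reduces to trading the single splitting condition for the two conditions in the statement, the arithmetic being controlled entirely by $c_1(B)$ versus $e(\xi)$.

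For the ``only if'' direction I would reuse the bundle computation already carried out in the proof of Proposition~\ref{thm:kasuya}, which never uses triviality of the normal bundle. From any CR-regular immersion $f\co M^n\to\C^{n-1}$ one gets $TM^n\cong\mathcal{L}_{\R}\oplus\mathcal{M}$ with $\mathcal{L}$ the complex line bundle of maximal complex tangencies, together with the isomorphism $\mathcal{L}\oplus(\mathcal{M}\otimes\C)\cong f^*T\C^{n-1}$, trivial of rank $n-1$. Since $M^n$ is orientable, $w_1(\mathcal{M})=0$, so $c_1(\mathcal{M}\otimes\C)=0$ and hence $c_1(\mathcal{L})=0$; thus $\mathcal{L}$ is trivial and the $2$-plane field $\mathcal{L}_{\R}$ has trivial Euler class. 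Regrouping $\C TM^n=\overline{\mathcal{L}}\oplus\bigl(\mathcal{L}\oplus(\mathcal{M}\otimes\C)\bigr)$ and using that $\overline{\mathcal{L}}$ is trivial then shows $\C TM^n$ is trivial. Both required conditions follow.

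For the ``if'' direction I would start from a trivialisation of $\C TM^n$ and an oriented $2$-plane field $\xi\subset TM^n$ with $e(\xi)=0$. Giving $\xi$ its complex structure yields a complex line bundle $L$ with $c_1(L)=e(\xi)=0$, so $L$ is trivial, and splitting $TM^n=\xi\oplus\eta$ gives $\C TM^n\cong L\oplus\overline{L}\oplus(\eta\otimes\C)$. I would set $B\coloneqq L$ and $A\coloneqq\overline{L}\oplus(\eta\otimes\C)$; then $B\cap\overline{B}=\{0\}$ is automatic and $A$ has complex rank $n-1$. It remains to see that $A$ is trivial: since $\C TM^n\cong A\oplus L$ is trivial and $L$ is trivial, $A$ is stably trivial, and a stably trivial complex bundle of rank $n-1$ over an $n$-manifold is genuinely trivial because the map $[M^n,BU(n-1)]\to[M^n,BU]$ is bijective for $n\leq2(n-1)$. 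Theorem~\ref{thm:JL} then delivers the CR-regular immersion.

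The main obstacle I anticipate is exactly this passage from stable to honest triviality of $A$ (and the companion point in the other direction, that $c_1$ of a complexification genuinely vanishes rather than being merely $2$-torsion, which is where orientability enters). The rank $n-1$ sits right at the boundary of the stable range against the dimension $n$, so the hypothesis $n\geq3$ is indispensable and the connectivity estimate for $BU(n-1)\to BU$ must be checked rather than waved through as ``high codimension''. Once that is in place, the remainder is routine Whitney-sum bookkeeping together with the identification $c_1(L)=e(\xi)$.
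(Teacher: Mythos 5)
Your proof is correct and takes essentially the same route as the paper: the ``only if'' direction reuses the bundle decomposition from the proof of Proposition~\ref{thm:kasuya} (observing, as the paper does, that triviality of the normal bundle is never used there) to obtain a trivial line bundle of complex tangencies and triviality of $\C TM^n$, while the ``if'' direction feeds the splitting $\C TM^n\cong L\oplus\bigl(\overline{L}\oplus(\eta\otimes\C)\bigr)$ into Theorem~\ref{thm:JL} after upgrading stable triviality of the rank-$(n-1)$ summand to genuine triviality. The only cosmetic difference is that you justify this last step by the connectivity of $BU(n-1)\to BU$, whereas the paper cites \cite[Lemma~1.2]{MR2304588} for the same fact.
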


\begin{proof}[{Proof of Proposition~\ref{prop:imm}}]
Suppose that there exists 
a CR-regular immersion of $M^n$ into $\C^{n-1}$ 
with normal bundle $\mathcal{N}$. 
Then, by exactly the same argument as in 
the proof of Proposition~\ref{thm:kasuya}, 
for a trivial complex line bundle $\mathcal{L}$ over $M^n$ 
we have the decomposition 
\begin{align}
TM^n\cong\mathcal{L}_{\mathbb{R}}\oplus\mathcal{M}, \label{TM}
\end{align}
where $\mathcal{L}_{\mathbb{R}}$ is the underlying real bundle 
of $\mathcal{L}$ and $\mathcal{M}$ is isomorphic to $\mathcal{N}$. 

From the bundle isomorphism \eqref{TM}, we see that 
the complexified tangent bundle is described as 
\begin{align*}
\C TM^n&\cong\mathcal{L}\oplus\overline{\mathcal{L}}\oplus(\mathcal{M}\otimes\C)\\
&\cong f^*T\C^{n-1}\oplus\overline{\mathcal{L}} 
\end{align*}
(see \cite[Lemma~15.4, p.\,176]{MR0440554}), which is trivial since 
$\overline{\mathcal{L}}$ is trivial. 

Conversely, suppose that 
the complexified tangent bundle $\C TM^n$ is trivial 
and $M^n$ has an oriented $2$-plane field 
with trivial Euler class. 
Then the tangent bundle $TM$ splits as 
\[
TM=P\oplus Q, 
\]
where $P$ is a trivial $2$-plane bundle 
and $Q$ is its complementary bundle. 
Then, the complexified tangent bundle splits as 
\begin{align*}
\C TM^n&=(P\otimes \C)\oplus (Q\otimes \C)\\
&=\epsilon \oplus \epsilon \oplus (Q\otimes \C), 
\end{align*}
where $\epsilon $ is the trivial complex line bundle. 
Thus we see that the complex vector bundle $\epsilon \oplus (Q\otimes \C)$ 
of rank $n-1$ is stably trivial, and actually 
trivial by the dimensional reason \cite[Lemma~1.2]{MR2304588}. 
Therefore, by Theorem~\ref{thm:JL} (putting $A=\epsilon \oplus (Q\otimes \C)$), 
$M^n$ admits a CR-regular immersion into $\C^{n-1}$. 
\end{proof}

The problem of determining when an orientable manifold has an 
oriented $2$-plane field has been solved due to 
Atiyah--Dupont \cite{MR0451256} 
and a series of papers by Thomas (see \cite{MR0287567}). 
We recall here the following special case where 
the Euler class of the $2$-plane bundle 
determined by the $2$-plane field is trivial. 

\begin{theorem}[Atiyah--Dupont~\cite{MR0451256} and Thomas~\cite{MR0287567}]\label{thm:AD}
A closed orientable smooth $n$-manifold $M^n$ has 
an oriented $2$-plane field 
with trivial Euler class if and only if 
either of the following holds: 
\begin{enumerate}[(a)]
\item
$n\equiv0\bmod4$, 
$\chi (M^n)=0$ and the signature 
$\sigma(M^n)$ of $M^n$ 
is congruent to zero modulo $4$; 
\item
$n\equiv1\bmod4$ and 
$\hat\chi_{\R}(M^n)=0$; 
\item
$n\equiv2\bmod4$ and 
$\chi (M^n)=0$; 
\item
$n\equiv3\bmod4$. 
\end{enumerate}
\end{theorem}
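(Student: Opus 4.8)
The plan is to reduce the stated condition to a classical span problem and then attack it by obstruction theory sharpened with index theory. Since $BSO(2)=\C P^\infty=K(\Z,2)$, an oriented $2$-plane bundle over $M^n$ is classified up to isomorphism by its Euler class in $H^2(M^n;\Z)$; in particular an oriented $2$-plane field with \emph{trivial} Euler class is nothing but a trivial oriented $2$-plane subbundle of $TM^n$, that is, a pair of everywhere linearly independent tangent vector fields. So I would rephrase the whole theorem as the determination of when $M^n$ admits a tangent $2$-field, which frees me from tracking the Euler class separately.

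Next I would run the obstruction theory in two steps. By Poincar\'e--Hopf a single nonvanishing vector field exists if and only if the Euler class $\chi(M^n)$ vanishes; this is automatic for $n$ odd and forces the necessary condition $\chi(M^n)=0$ in the even cases (a) and (c). Fixing such a field $v$, a second independent field is exactly a section of the unit-sphere bundle of the orthogonal complement $v^{\perp}\subset TM^n$, whose fibre is $S^{n-2}$ and hence is $(n-3)$-connected. A section therefore exists over the $(n-2)$-skeleton, with obstructions confined to the top two dimensions: an intermediate class in $H^{n-1}(M^n;\pi_{n-2}(S^{n-2}))=H^{n-1}(M^n;\Z)$ and a final class in $H^n(M^n;\pi_{n-1}(S^{n-2}))$.

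The intermediate obstruction is a relative Euler class, and I would argue that it can be annihilated by modifying $v$ and the section over the $(n-1)$-skeleton, so that the decisive invariant is the top obstruction $o\in H^n(M^n;\pi_{n-1}(S^{n-2}))$. For $n\geq5$ the coefficient group is the first stable stem $\pi_{n-1}(S^{n-2})\cong\Z/2\Z$, so $o$ is $\Z/2\Z$-valued, and the four congruence classes now separate: for $n\equiv2\bmod4$ I expect $o$ to vanish identically, so that $\chi(M^n)=0$ already suffices, giving case (c); for $n\equiv0\bmod4$ a genuinely surviving $\Z/2\Z$-obstruction remains; and for $n$ odd, where $\chi(M^n)=0$ is free, the existence of a $2$-field is governed by this single $\Z/2\Z$-class alone.

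The hard part will be turning these homotopy-theoretic top obstructions into the stated characteristic numbers and semi-characteristics, which is precisely the content of the Atiyah--Dupont theory; the rest is bookkeeping in the Postnikov tower of the sphere bundle. Concretely I would invoke the mod $2$ index theorem to identify the odd-dimensional obstruction with the real semi-characteristic $\hat\chi_{\R}(M^n)$, yielding case (b), and to show the corresponding $KO$-theoretic index vanishes automatically when $n\equiv3\bmod4$, yielding case (d). For $n\equiv0\bmod4$ I would match the surviving $\Z/2\Z$-obstruction with $\tfrac12\sigma(M^n)\bmod2$; this is legitimate because $\chi(M^n)=0$ forces $\sigma(M^n)$ to be even (both are congruent to the middle Betti number modulo $2$), so the condition becomes $\sigma(M^n)\equiv0\bmod4$, recovering case (a). This index-theoretic reinterpretation of a secondary obstruction as a signature or a semi-characteristic is the one genuinely deep ingredient; the remaining low-dimensional case $n=4$, where $\pi_{3}(S^2)\cong\Z$, is handled by the same scheme and the classical computation of this $\Z$-valued obstruction.
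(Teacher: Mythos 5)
First, a point of reference: the paper does not prove this statement at all --- it is quoted as a known theorem of Atiyah--Dupont and Thomas, so your proposal can only be compared with the cited literature, not with an internal argument. Your opening reduction is correct and is exactly the right bridge between the paper's phrasing and that literature: since $BSO(2)$ is a $K(\Z,2)$, an oriented $2$-plane bundle is determined up to isomorphism by its Euler class, so an oriented $2$-plane field with trivial Euler class is the same thing as a trivial oriented $2$-dimensional subbundle of $TM^n$, i.e.\ two everywhere linearly independent vector fields; the theorem is therefore equivalent to the classical determination of when $\mathop{\mathrm{span}}M^n\geq2$. Your parity remark in case (a), that $\chi(M^{4k})=0$ forces $\sigma(M^{4k})$ to be even because both are congruent to the middle Betti number modulo $2$, is also correct.

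From there on, however, what you have written is a plan whose decisive steps are precisely the theorems being quoted. The identification of the top obstruction with $\hat\chi_{\R}(M^n)$ for $n\equiv1\bmod4$ is Atiyah's mod $2$ index theorem; the identification with $\tfrac12\sigma(M^n)\bmod2$ for $n\equiv0\bmod4$ is the main theorem of Atiyah--Dupont; the identical vanishing of the obstruction for $n\equiv2\bmod4$ and the unconditional existence for $n\equiv3\bmod4$ are likewise theorems from this circle of work (Thomas and others; the case $n=4$ is Hirzebruch--Hopf). Invoking them is legitimate --- the paper itself simply cites the result --- but then your argument is a reduction to the cited literature, not an independent proof, and the ``one genuinely deep ingredient'' you defer is in effect the entire theorem. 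Two steps are moreover asserted where a real argument is needed: (i) the claim that the intermediate obstruction in $H^{n-1}(M^n;\Z)$, the Euler class of $v^{\perp}$, ``can be annihilated by modifying $v$'' is not bookkeeping; this class depends on the homotopy class of $v$, and making it vanish requires either Massey-type vanishing theorems for characteristic classes of orientable manifolds or passing to the bundle of $2$-frames $V_{n,2}$, which is the standard route; (ii) skeletal top obstructions are well defined only modulo indeterminacy coming from choices over lower skeleta, so the \emph{necessity} direction (existence of a $2$-field forces $\hat\chi_{\R}=0$, resp.\ $\sigma\equiv0\bmod4$) cannot be obtained by evaluating ``the'' obstruction class; this is exactly why Atiyah--Dupont work with indices of $2$-fields with finite singularities rather than with obstruction cochains.
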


By combining Proposition~\ref{prop:imm} with Theorem~\ref{thm:AD}, 
we obtain Corollary~\ref{cor:imm} below. 
Compare it with Theorem~\ref{thm:audin} (Audin). 
Note that if $\C TM^n$ is trivial 
then all the Pontryagin classes of $M^n$ vanish 
and therefore we have $\sigma(M^n)=0$. 

\begin{corollary}\label{cor:imm}
A closed orientable 
$n$-manifold $M^n$ admits 
a CR-regular immersion into $\C^{n-1}$ if and only if 
\begin{enumerate}[(a)]
\item
when $n\equiv0\bmod2$, $\C TM^n$ is trivial and 
$\chi (M^n)=0$; 
\item
when $n\equiv1\bmod4$, $\C TM^n$ is trivial and 
$\hat\chi_{\R}(M^n)=0$; 
\item
when $n\equiv3\bmod4$, $\C TM^n$ is trivial. 
\end{enumerate}
\end{corollary}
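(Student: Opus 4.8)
The plan is to derive the corollary mechanically from Proposition~\ref{prop:imm} and Theorem~\ref{thm:AD}, reorganising the four residue classes modulo $4$ into the three cases of the statement. By Proposition~\ref{prop:imm}, $M^n$ admits a CR-regular immersion into $\C^{n-1}$ if and only if two conditions hold simultaneously: that $\C TM^n$ is trivial, and that $M^n$ carries an oriented $2$-plane field with trivial Euler class. The triviality of $\C TM^n$ appears in every case of the corollary, so the real work is to translate the second condition, via Theorem~\ref{thm:AD}, into the extra numerical hypotheses listed.

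First I would handle the even case. When $n\equiv2\bmod4$, Theorem~\ref{thm:AD}(c) says the $2$-plane field exists exactly when $\chi(M^n)=0$, which together with triviality of $\C TM^n$ gives precisely case (a). When $n\equiv0\bmod4$, Theorem~\ref{thm:AD}(a) imposes \emph{both} $\chi(M^n)=0$ \emph{and} $\sigma(M^n)\equiv0\bmod4$, so a priori this subcase looks stronger than what (a) asserts. The key step---and the only non-formal one---is to observe that the signature condition is automatic under our standing assumption: if $\C TM^n$ is trivial then $TM^n\otimes\C$ has vanishing Chern classes, hence all Pontryagin classes $p_i(M^n)$ vanish, and the Hirzebruch signature theorem forces $\sigma(M^n)=0$. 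Thus $\sigma(M^n)\equiv0\bmod4$ is free, and the two subcases $n\equiv0$ and $n\equiv2\bmod4$ collapse into the single condition ``$\C TM^n$ trivial and $\chi(M^n)=0$'' of case (a).

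The remaining odd cases are immediate. For $n\equiv1\bmod4$, Theorem~\ref{thm:AD}(b) characterises the existence of the $2$-plane field by $\hat\chi_{\R}(M^n)=0$, yielding case (b). For $n\equiv3\bmod4$, Theorem~\ref{thm:AD}(d) guarantees the $2$-plane field unconditionally, so the only surviving requirement is that $\C TM^n$ be trivial, which is case (c). I expect the main obstacle to be nothing more than the bookkeeping in the $n\equiv0\bmod4$ subcase: one must be sure that triviality of the complexified tangent bundle really does kill the signature, which is exactly the remark the authors inserted just before the statement.
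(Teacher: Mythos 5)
Your proof is correct and follows exactly the paper's route: combining Proposition~\ref{prop:imm} with Theorem~\ref{thm:AD}, and disposing of the signature condition in the $n\equiv0\bmod4$ subcase by noting that triviality of $\C TM^n$ kills all Pontryagin classes and hence, by the Hirzebruch signature theorem, forces $\sigma(M^n)=0$. This last observation is precisely the remark the authors place immediately before the statement, so there is nothing to add.
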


Now we prove our main theorem. 

\begin{proof}[{Proof of Theorem~\ref{thm:main}}]
Theorem~\ref{thm:main}~(a) is just a combination of 
Theorem~\ref{thm:audin}~(a) and Corollary~\ref{cor:imm}~(a). 

Suppose that $n\equiv1\bmod4$. 
Then, according to Lusztig--Milnor--Peterson's 
formula \cite{MR0246308} 
(see also \cite[Remark on p.\,16]{MR0263102}) 
and Remark~\ref{rem:normal} below, 
we have 
\[
\hat\chi_{\mathbb{R}}(M^n)-\hat\chi_{\mathbb{Z}/2\mathbb{Z}}(M^n)
=w_2w_{n-2}[M^n]. 
\]
Therefore, if $w_2w_{n-2}[M^n]=0$ then 
$\hat\chi_{\mathbb{R}}(M^n)$ 
and $\hat\chi_{\mathbb{Z}/2\mathbb{Z}}(M^n)$ 
have the same parity, so that 
Theorem~\ref{thm:audin}~(b) and Corollary~\ref{cor:imm}~(b) 
prove the Theorem~\ref{thm:main}~(b). 
On the contrary, if $w_2w_{n-2}[M^n]\ne0$ then 
either one of $\hat\chi_{\mathbb{R}}(M^n)$ 
and $\hat\chi_{\mathbb{Z}/2\mathbb{Z}}(M^n)$ 
must be non-zero and the other zero; 
Theorem~\ref{thm:audin}~(b) and Corollary~\ref{cor:imm}~(b) 
then imply Theorem~\ref{thm:main}~(c). 
\end{proof}

\begin{remark}\label{rem:normal}
In view of \eqref{TM} in the proof of Proposition~\ref{prop:imm}, 
we see that for a closed orientable 
$n$-manifold $M^n$ admitting 
a CR-regular immersion into $\C^{n-1}$ and each $i$, 
\begin{align*}
w_i(M^n)
=w_i(\mathcal{L}_{\mathbb{R}}\oplus\mathcal{M})
=w_i(\mathcal{M})
=w_i(\mathcal{N})
=\bar{w}_i(M^n). \label{SW}
\end{align*}
Hence 
the square of each Stiefel--Whitney class of 
$M^n$ vanishes. 
This is also seen from 
the triviality of $\C TM^n\cong TM\oplus TM$ 
(see \cite{MR966952}). 
Therefore, the condition on the Stiefel--Whitney number 
in Theorem~\ref{thm:main} can be replaced with 
that on the normal Stiefel--Whitney number. 
\end{remark}

As mentioned in \S\ref{sect:introduction}, 
we have $\bar{w}_{n-2}(M^n)=0$ for a closed 
orientable $n$-manifold $M^n$ embeddable in $\R^{2n-2}$, 
and hence Corollary~\ref{cor:main} follows in view of Remark~\ref{rem:normal}. 
As a consequence of Theorem~\ref{thm:main} 
and Corollary~\ref{cor:main} 
we have the following. 

\begin{corollary}\label{cor:kervaire}
If a closed orientable $n$-manifold, with $n\equiv1\bmod4$
(resp.\ $n\equiv0\bmod2$), 
admits a CR-regular embedding into $\C^{n-1}$, 
then it has trivial Kervaire semi-characteristic 
(resp.\ trivial Euler characteristic). 
\end{corollary}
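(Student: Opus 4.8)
The plan is to convert the given CR-regular embedding into $\C^{n-1}$ into a totally real embedding into $\C^n$, for which Audin's criterion (Theorem~\ref{thm:audin}) reads off the relevant semi-characteristic obstruction directly. First I would note that in both cases $n\not\equiv3\bmod4$: when $n\equiv1\bmod4$ this is immediate, and when $n$ is even the residue modulo $4$ is $0$ or $2$, never $3$. Hence Corollary~\ref{cor:main} applies, and the hypothesised CR-regular embedding into $\C^{n-1}$ yields a totally real embedding into $\C^n$.

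With this totally real embedding in hand, I would finish by applying the appropriate clause of Theorem~\ref{thm:audin}. When $n$ is even, part~(a) forces $\chi(M^n)=0$; when $n\equiv1\bmod4$, part~(b) forces $\hat\chi_{\Z/2\Z}(M^n)=0$. In the even case there is also a shorter route bypassing Corollary~\ref{cor:main}: a CR-regular embedding is in particular a CR-regular immersion, so Corollary~\ref{cor:imm}~(a) already gives $\chi(M^n)=0$.

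The substantive point---and the reason the embedding hypothesis, as opposed to mere immersion, is indispensable when $n\equiv1\bmod4$---lies in distinguishing the two semi-characteristics. A CR-regular immersion by itself controls only the \emph{real} semi-characteristic: through Corollary~\ref{cor:imm}~(b) (equivalently Proposition~\ref{prop:imm} together with Theorem~\ref{thm:AD}~(b)) it forces $\hat\chi_{\R}(M^n)=0$, while saying nothing a priori about $\hat\chi_{\Z/2\Z}(M^n)$. The two differ by the de Rham invariant $w_2w_{n-2}[M^n]$ via the Lusztig--Milnor--Peterson formula. What the embedding buys is exactly the vanishing of this difference: an embedding into $\C^{n-1}=\R^{2n-2}$ forces $\bar{w}_{n-2}(M^n)=0$, since the normal bundle of a closed orientable $n$-manifold in Euclidean space has trivial Euler class by \cite[Corollary~11.4]{MR0440554}; and because Remark~\ref{rem:normal} identifies tangential and normal Stiefel--Whitney classes for CR-regular immersions, this gives $w_{n-2}(M^n)=0$ and hence $w_2w_{n-2}[M^n]=0$. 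This upgrades the vanishing of $\hat\chi_{\R}$ to that of $\hat\chi_{\Z/2\Z}$, which is precisely the content that Corollary~\ref{cor:main} packages. I therefore expect the main obstacle to be not any single computation but the conceptual step of recognising that the codimension-two embedding constraint is what transmutes the immersion-level obstruction (the real semi-characteristic) into the embedding-level obstruction (the Kervaire semi-characteristic).
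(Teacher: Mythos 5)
Your proof is correct and follows essentially the same route as the paper: the paper also deduces this corollary by combining Corollary~\ref{cor:main} (the passage from a CR-regular embedding into $\C^{n-1}$ to a totally real embedding into $\C^n$, valid since $n\not\equiv3\bmod4$ in both cases) with Audin's criterion via Theorem~\ref{thm:main}. Your closing discussion of the de Rham invariant and the identification of tangential and normal Stiefel--Whitney classes is precisely the mechanism the paper uses to establish Corollary~\ref{cor:main} itself, so nothing is missing.
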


Note that for $n\equiv0\bmod2$ 
we will have the more general Corollary~\ref{cor:even-dim}. 

\begin{remark}\label{rem:span}
Let $M^n$ be a closed orientable 
$n$-manifold $M^n$ which admits 
a CR-regular immersion into $\C^{n-1}$. 
Then, we see from \eqref{TM} that 
$\mathop{\mathrm{span}}M^n\geq2$, where 
$\mathop{\mathrm{span}}M^n$ stands for 
the maximum number of everywhere linearly
independent vector fields on $M^n$. 
\end{remark}

\begin{remark}\label{rem:mahowald}
Let $M^n$ be a closed orientable $n$-manifold and $n>4$. 
When $n$ is odd $M^n$ can be immersed in $\R^{2n-2}$; 
when $n$ is even $M^n$ can be 
be immersed in $\R^{2n-2}$ if and only if $\bar{w}_2\bar{w}_{n-2}[M^n]=0$ 
 (\cite[Theorem~7.1.1]{MR0157386}). 

If $n\equiv3\bmod4$ then $\bar{w}_{n-2}(M^n)$ always vanishes 
(\cite{MR0035992}, see \cite[Thoerem~III]{MR0111053})
and hence $\bar{w}_2\bar{w}_{n-2}[M^n]=0$. 
This is not the case where $n\equiv1\bmod4$; e.\,g., 
the $5$-dimensional Dold manifold $P(1,2)$ can be 
immersed in $\R^8$ and has non-trivial $\bar{w}_2\bar{w}_{3}$. 
Note that $P(1,2)$ does not admit 
a CR-regular immersion into $\C^4$ since its span equals $1$ 
(see Remark~\ref{rem:span}). 
\end{remark}

\begin{remark}\label{rem:wu}
The Wu manifold 
$X_{\infty}=SU(3)/SO(3)$ \cite{MR0035992} 
is a closed simply-connected $5$-manifold 
with $w_2w_3[X_{\infty}]\ne0$ \cite[p.\,80]{MR0061823}. 
It appears in Barden's list \cite{MR0184241}
as one of the ``building blocks'' of simply-connected $5$-manifolds 
(see \S\ref{sect:5dim}), and has been proved to admit 
no embedding into $\R^8$ \cite[Theorem~2.5]{MR0184241}. 
On the other hand, Audin \cite[Proposition~0.8]{MR966952} 
has shown that
$X_{\infty}$ has trivial Kervaire semi-characteristic 
and admits a totally real embedding into $\C^5$. 
This implies that the converse of 
Corollary~\ref{cor:main} does not hold in general. 
We may ask the following question: 
\textit{if a closed orientable $n$-manifold, 
with $n\not\equiv3\bmod4$, 
admits a totally real embedding into $\C^n$ 
and an embedding into $\R^{2n-2}$, 
then does it admit a CR-regular embedding into $\C^{n-1}$?}
\end{remark}

\section{Generic immersions of simply-connected $5$-manifolds into $\C^4$}\label{sect:5dim}
Let $M^5$ be a closed simply-connected $5$-manifold. 
Then, 
%
by definition, 
\begin{align*}
\hat\chi_{\mathbb{R}}(M^5)
&=\mathrm{rank}\thinspace H_2(M^5;\Z)+1&\pmod2.
\end{align*}
Therefore, Corollary~\ref{cor:imm}, together with 
Slapar--Torres's result \cite[Theorem~C]{Slapar-Torres} 
claiming that 
every closed simply-connected $5$-manifold has 
trivial complexified tangent bundle, 
implies the following. 

\begin{theorem}\label{thm:5dim}
A closed simply-connected $5$-manifold $M^5$ admits 
a CR-regular immersion into $\C^4$ if and only if 
the rank of $H_2(M^5;\Z)$ 
is odd. 
\end{theorem}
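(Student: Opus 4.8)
The plan is to invoke the two main tools already assembled in the paper and reduce the statement to a clean parity computation for simply-connected $5$-manifolds. Since $M^5$ is closed, simply-connected, and $5$-dimensional, the Slapar--Torres result cited in the text guarantees that $\C TM^5$ is trivial, so the first of the two conditions in Corollary~\ref{cor:imm}~(b) is automatically satisfied. Hence the existence of a CR-regular immersion into $\C^4$ is equivalent to the single remaining condition $\hat\chi_{\R}(M^5)=0$, namely the vanishing of the real semi-characteristic. The whole theorem therefore collapses to showing that $\hat\chi_{\R}(M^5)=0$ if and only if $\operatorname{rank}H_2(M^5;\Z)$ is odd.

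First I would write out the real semi-characteristic explicitly from its definition, summing the ranks of $H_i(M^5;\R)$ for $i=0,1,2$ modulo $2$. For a closed simply-connected manifold we have $H_0(M^5;\R)\cong\R$ and $H_1(M^5;\R)=0$, so the $i=0$ term contributes $1$, the $i=1$ term contributes $0$, and the $i=2$ term contributes $\operatorname{rank}H_2(M^5;\R)\bmod 2$. Because $H_2(M^5;\Z)$ is finitely generated, its real (equivalently rational) rank equals its free rank, so $\operatorname{rank}H_2(M^5;\R)=\operatorname{rank}H_2(M^5;\Z)$. This yields the formula
\[
\hat\chi_{\R}(M^5)\equiv \operatorname{rank}H_2(M^5;\Z)+1 \pmod 2,
\]
which is exactly the displayed identity the paper records just before the theorem statement.

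From this identity the equivalence is immediate: $\hat\chi_{\R}(M^5)=0$ precisely when $\operatorname{rank}H_2(M^5;\Z)+1$ is even, that is, when $\operatorname{rank}H_2(M^5;\Z)$ is odd. Combining this with the reduction in the first paragraph completes the argument. I do not anticipate any genuine obstacle here; the entire content has been front-loaded into Corollary~\ref{cor:imm} and the Slapar--Torres triviality result. The only point demanding a little care is the translation between the integral homology rank that appears in the statement and the real (field-coefficient) rank that enters the definition of $\hat\chi_{\R}$, together with the vanishing of $H_1$; both follow from simple-connectivity and the universal coefficient theorem, so the proof is essentially a two-line assembly once the background theorems are in hand.
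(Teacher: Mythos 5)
Your proposal is correct and follows exactly the paper's own route: the paper likewise combines Corollary~\ref{cor:imm}~(b) with the Slapar--Torres triviality of $\C TM^5$ and the parity identity $\hat\chi_{\R}(M^5)\equiv\mathrm{rank}\thinspace H_2(M^5;\Z)+1\pmod2$ coming from simple-connectivity. The only difference is presentational: you spell out the universal-coefficient step that the paper compresses into the phrase ``by definition.''
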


\begin{table}\renewcommand{\arraystretch}{1.2}
\[\begin{array}{|c||c|c|c|c|}
\hline
Y&H_2(Y;\Z)&w_2w_3[Y]&\hat\chi_{\mathbb{Z}/2\mathbb{Z}}(Y)&\hat\chi_{\mathbb{R}}(Y)\\
\hline
\hline
M_k\ (k\ge2)&\Z/k\Z\oplus\Z/k\Z&0&1&1\\
\hline
M_{\infty}\cong S^2\times S^3&\Z&0&0&0\\
\hline
X_{-1}\cong SU(3)/SO(3)&\Z/2\Z&1&0&1\\
\hline
X_{0}\cong S^5&0&0&1&1\\
\hline
X_j\ (j\in\N)&\Z/2^j\Z\oplus\Z/2^j\Z&0&1&1\\
\hline
X_{\infty}\cong S^2\mathbin{\tilde{\times}}S^3&\Z&0&0&0\\
\hline
\end{array}\]
\caption{Barden's list}\label{table:barden}
\end{table}

Barden \cite{MR0184241} has proved that 
every closed simply-connected $5$-manifold $M^5$ 
can be expressed as a connected sum of the manifolds 
listed in Table~\ref{table:barden}; more precisely, 
$M^5$ is diffeomorphic to a connected sum 
$M_{k_1}\mathbin{\sharp}\cdots\mathbin{\sharp}M_{k_l}\mathbin{\sharp}X_m$, 
where $-1\leq m\leq\infty,\ l\geq0,\ k_1>1$, and 
$k_i$ divides $k_{i+1}$ or $k_{i+1}=\infty$. 
Since each manifold in Barden's list is irreducible, 
with respect to connected sum, 
except $X_1\cong X_{-1}\mathbin{\sharp}X_{-1}$, 
we may refer to the above decomposition 
just as the connected sum decomposition of $M^5$. 
Then, the following is an easy 
interpretation of Theorem~\ref{thm:5dim}. 

\begin{corollary}\label{cor:5dim}
A closed simply-connected $5$-manifold $M^5$ admits 
a CR-regular immersion into $\C^4$ 
if and only if the sum of the numbers of copies 
of $S^2\times S^3$ and $S^2\mathbin{\tilde{\times}}S^3$, 
appearing in the connected sum decomposition of $M^5$, 
is odd. 
\end{corollary}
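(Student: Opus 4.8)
The plan is to deduce the statement directly from Theorem~\ref{thm:5dim} by computing the rank of $H_2(M^5;\Z)$ through Barden's connected sum decomposition. First I would record the additivity of second homology under connected sum of closed simply-connected $5$-manifolds: writing $A\mathbin{\sharp}B=(A\setminus\mathrm{int}\,D^5)\cup_{S^4}(B\setminus\mathrm{int}\,D^5)$ and applying Mayer--Vietoris, the separating sphere contributes $H_2(S^4)=H_3(S^4)=0$, so $H_2(A\mathbin{\sharp}B;\Z)\cong H_2(A;\Z)\oplus H_2(B;\Z)$. Iterating this over the decomposition $M^5\cong M_{k_1}\mathbin{\sharp}\cdots\mathbin{\sharp}M_{k_l}\mathbin{\sharp}X_m$ expresses $H_2(M^5;\Z)$ as the direct sum of the second homology groups of the summands, so that $\mathrm{rank}\,H_2(M^5;\Z)$ is the sum of the ranks of those groups.

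Next I would read off the free ranks from the second column of Table~\ref{table:barden}. Among the building blocks, only $M_\infty\cong S^2\times S^3$ and $X_\infty\cong S^2\mathbin{\tilde{\times}}S^3$ have $H_2\cong\Z$ (free rank one); the remaining blocks $M_k$ ($k\ge2$), $X_{-1}$, $X_0$ and $X_j$ ($j\in\N$) all have finite, purely torsion $H_2$ and hence contribute nothing to the rank. Therefore $\mathrm{rank}\,H_2(M^5;\Z)$ equals precisely the total number of copies of $S^2\times S^3$ and $S^2\mathbin{\tilde{\times}}S^3$ appearing in the connected sum decomposition of $M^5$.

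Combining this count with Theorem~\ref{thm:5dim}, which asserts that $M^5$ admits a CR-regular immersion into $\C^4$ if and only if $\mathrm{rank}\,H_2(M^5;\Z)$ is odd, immediately gives the corollary. The argument is essentially bookkeeping, and the only point deserving care --- the nearest thing to an obstacle --- is the interaction with the irreducibility remark preceding the statement (for instance $X_1\cong X_{-1}\mathbin{\sharp}X_{-1}$): one must check that any such redundancy leaves the rank count unchanged, which it does because $X_{-1}$ has torsion $H_2$ and so never alters the free rank.
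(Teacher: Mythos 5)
Your proposal is correct and matches the paper's intended argument: the paper treats this corollary as an ``easy interpretation'' of Theorem~\ref{thm:5dim} via Barden's decomposition, exactly as you do --- additivity of $H_2$ under connected sum plus the observation from Table~\ref{table:barden} that only $S^2\times S^3$ and $S^2\mathbin{\tilde{\times}}S^3$ contribute free rank, all other blocks having torsion $H_2$. Your extra check that the redundancy $X_1\cong X_{-1}\mathbin{\sharp}X_{-1}$ does not affect the rank count is a sound (if implicit in the paper) point of care.
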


It seems interesting to compare Corollary~\ref{cor:5dim} 
with \cite[Theorem~C and Remark~2]{Slapar-Torres}. 

\section{Generic immersions of spheres}\label{sect:spheres}
The basic ideas of the proofs of Propositions~\ref{thm:kasuya} 
and \ref{prop:imm} are applicable 
for CR-regular immersions of spheres in more general 
codimensions. 

We start with a very easy observation. 

\begin{proposition}\label{prop:elg}
Suppose that a closed orientable $n$-manifold $M^n$ admits 
a CR-regular immersion $f\co M^n\to\C^q$ with normal bundle $\mathcal{N}$. 
If the normal bundle $\mathcal{N}$ possesses 
$r$ linearly independent sections,
then $\mathop{\mathrm{span}}M^n\geq r$, 
that is,
$M^n$ admits $r$ everywhere linearly
independent vector fields. 
\end{proposition}

\begin{proof}
A similar argument can be found in the proof of 
Propositions~\ref{thm:kasuya}. 
In each $T_xM^n$, 
we can find a complex $(n-q)$-space, 
isomorphic to $df_x(T_xM^n)\cap Jdf_x(T_xM^n)$ through $df_x$. 
Thus we have a $(n-q)$-dimensional complex bundle 
$\mathcal{L}$ over $M^n$ and 
the decomposition 
\[
TM^n\cong\mathcal{L}_{\mathbb{R}}\oplus\mathcal{M}, 
\]
where $\mathcal{L}_{\mathbb{R}}$ is the underlying real bundle 
of $\mathcal{L}$ and $\mathcal{M}$ is the complementary bundle. 
Furthermore we have 
\[
\mathcal{L}_{\mathbb{R}}\oplus\mathcal{M}\oplus\mathcal{N}
\cong f^*T\R^{2q}=f^*T\C^q, 
\]
where $\mathcal{M}$ and $\mathcal{N}$ are isomorphic
through $J$. Namely a section of $\mathcal{N}$ determines 
a section of $\mathcal{M}$, that is, a vector field on $M^n$. 
\end{proof}

\begin{corollary}\label{cor:even-dim}
If a closed orientable $n$-manifold admits 
a CR-regular embedding into some complex space $\C^q$ then 
it has trivial Euler characteristic. 
\end{corollary}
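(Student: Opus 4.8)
The plan is to deduce the vanishing of $\chi(M^n)$ from the splitting of $TM^n$ produced in the proof of Proposition~\ref{prop:elg}, combined with the classical fact that the normal bundle of an embedding into a Euclidean space has trivial Euler class. Throughout, write $f\co M^n\to\C^q=\R^{2q}$ for the given CR-regular embedding and let $\mathcal{N}$ be its normal bundle, of real rank $2q-n$. Since an embedding has codimension at least $1$ we have $2q>n$ (so the hypothesis $n<2q$ of Theorem~\ref{thm:JL} is automatic), and since $M^n$ and $\R^{2q}$ are orientable, so is $\mathcal{N}$; hence its Euler class $e(\mathcal{N})\in H^{2q-n}(M^n;\Z)$ is defined.

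I would treat first the principal range $q\le n$. Here the $J$-argument in the proof of Proposition~\ref{prop:elg} (equivalently Proposition~\ref{thm:kasuya}) furnishes a splitting $TM^n\cong\mathcal{L}_{\R}\oplus\mathcal{M}$ in which $\mathcal{M}$ is carried isomorphically onto $\mathcal{N}$ by $J$; when $q=n$ the complex summand $\mathcal{L}$ is zero and one simply has $TM^n\cong\mathcal{M}\cong\mathcal{N}$. In either situation the real ranks $2(n-q)$ of $\mathcal{L}_{\R}$ and $2q-n$ of $\mathcal{M}$ add up to $n$, so applying the Whitney product formula for Euler classes to this splitting I would write
\[
e(TM^n)=e(\mathcal{L}_{\R})\cup e(\mathcal{M})=e(\mathcal{L}_{\R})\cup e(\mathcal{N})\in H^n(M^n;\Z).
\]
The key input is that, $f$ being an embedding, $e(\mathcal{N})=0$ by \cite[Corollary~11.4]{MR0440554}; consequently $e(TM^n)=0$. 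Evaluating on the fundamental class and using $\langle e(TM^n),[M^n]\rangle=\chi(M^n)$ then yields $\chi(M^n)=0$. (The orientation of the isomorphism $\mathcal{M}\cong\mathcal{N}$ is immaterial, as it can at worst change $e(\mathcal{N})$ by a sign.)

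The remaining range $q>n$ is easier and I would dispatch it by a dimension count rather than by Euler classes: here $\mathcal{N}$ has rank $2q-n>n=\dim M^n$, so a section transverse to the zero section is nowhere vanishing, i.e.\ $\mathcal{N}$ possesses a nowhere-zero section. Proposition~\ref{prop:elg} (with $r=1$) then gives $\mathop{\mathrm{span}}M^n\ge1$, so $M^n$ carries a nowhere-zero vector field and $\chi(M^n)=0$ by Poincar\'e--Hopf. (For odd $n$ the conclusion holds automatically and is subsumed in either case above.)

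I expect the only delicate point to be the bookkeeping in the principal case: one must check that the splitting $TM^n\cong\mathcal{L}_{\R}\oplus\mathcal{M}$ with $\mathcal{M}\cong\mathcal{N}$ is exactly the one supplied by the $J$-argument of Proposition~\ref{prop:elg}, that the ranks indeed match so that the product $e(\mathcal{L}_{\R})\cup e(\mathcal{M})$ lands in $H^n(M^n;\Z)$, and that $e(\mathcal{N})$ is read as a cohomology class (not merely as the normal Euler number) when invoking \cite[Corollary~11.4]{MR0440554}. Everything else is a direct application of results already established in the excerpt.
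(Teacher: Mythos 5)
Your proposal is correct, and in the principal case $q\le n$ it runs along a genuinely different line from the paper's own proof. The paper argues via sections: it quotes \cite[Corollary~11.4]{MR0440554} to get $e(\mathcal{N})=0$, concludes that $\mathcal{N}$ has a nowhere-zero section, feeds that single section into Proposition~\ref{prop:elg} (with $r=1$) to obtain a nowhere-zero vector field on $M^n$, and finishes with Poincar\'e--Hopf. You use the same input $e(\mathcal{N})=0$ multiplicatively instead: from the splitting $TM^n\cong\mathcal{L}_{\mathbb{R}}\oplus\mathcal{M}$ with $\mathcal{M}\cong\mathcal{N}$ you get $e(TM^n)=e(\mathcal{L}_{\mathbb{R}})\cup e(\mathcal{M})=\pm\, e(\mathcal{L}_{\mathbb{R}})\cup e(\mathcal{N})=0$ and evaluate on the fundamental class. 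This actually buys something: the inference ``trivial Euler class implies a nowhere-zero section'' is automatic only when $\mathrm{rank}\,\mathcal{N}=2q-n\geq n$, whereas in the range of chief interest $q<n$ (e.g.\ embeddings into $\C^{n-1}$, where $\mathrm{rank}\,\mathcal{N}=n-2$) the Euler class is only the \emph{primary} obstruction to a section and higher obstructions can a priori survive; your cup-product argument bypasses the production of a section altogether, so it quietly repairs this soft spot in the paper's one-line deduction.

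Two caveats on your secondary case $q>n$. Under the paper's literal definition this case is vacuous: when $n<q$ the defining equality $\dim_{\C}(df_x(T_xM^n)\cap Jdf_x(T_xM^n))=n-q$ can never hold, so no CR-regular immersions exist at all, and there is nothing to prove. Moreover, your appeal to Proposition~\ref{prop:elg} there should not be trusted on its merits: the $J$-isomorphism $\mathcal{M}\cong\mathcal{N}$ underlying that proposition requires the ranks $n-2(n-q)$ and $2q-n$ to agree, i.e.\ $q\le n$; for $q>n$ the map $\mathcal{M}\to\mathcal{N}$ is injective but not surjective, so a section of $\mathcal{N}$ does not induce a vector field. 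Indeed, were one to read ``CR-regular'' as ``totally real'' when $q>n$, the statement itself would fail --- $S^2$ admits a totally real embedding into $\C^3$ yet $\chi(S^2)=2$ --- so that case must be excluded by the definition, not proved. Since it is excluded, this does not affect the correctness of your proof.
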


\begin{proof}
Since the normal bundle of an embedding of a closed 
orientable $n$-manifold $M^n$ into a Euclidean space 
has trivial Euler class (see \cite[Corollary~11.4]{MR0440554}), 
it has a nowhere-zero section. 
By Proposition~\ref{prop:elg}, 
$M^n$ has a 
nowhere-zero vector field and hence 
vanishing Euler characteristic 
by the Poincar\'e--Hopf theorem. 
\end{proof}

Now we recall Adams' fundamental result \cite{MR0139178} on vector fields on spheres. 

Given a positive integer $N$, 
find integers $a$ and $b$ such that 
$N=(2a+1)2^b$. 
Then the Radon--Hurwitz number is defined as 
\begin{align*}
\rho(N)=
\begin{cases}2b+1&b\equiv0\bmod4,\\2b&b\equiv1,2\bmod4,\\2b+2&b\equiv3\bmod4.\end{cases}
\end{align*}

\begin{theorem}[Adams~\cite{MR0139178}]\label{thm:adams}
The maximum number of linearly independent vector 
fields on $S^{N-1}$ is equal to $\rho(N)-1$. 
\end{theorem}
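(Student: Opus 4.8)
The plan is to prove the two inequalities separately: a lower bound, constructing at least $\rho(N)-1$ everywhere linearly independent vector fields on $S^{N-1}$, and an upper bound, showing that no more than $\rho(N)-1$ can exist. The lower bound is the classical Hurwitz--Radon construction via Clifford algebras and is essentially elementary; the upper bound is the deep part and is the reason the statement is attributed to Adams.

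For the lower bound I would realise the vector fields by means of a Clifford module structure. The Radon--Hurwitz number is arranged precisely so that $\R^N$ admits a system of $k=\rho(N)-1$ skew-symmetric orthogonal matrices $A_1,\dots,A_k$ satisfying $A_i^2=-I$ and $A_iA_j=-A_jA_i$ for $i\ne j$; equivalently $\R^N$ carries a module structure over the real Clifford algebra on $k$ anticommuting generators, whose existence for this value of $k$ follows from the mod-$8$ periodicity of real Clifford algebras. Given such matrices, at each $x\in S^{N-1}$ the vectors $A_1x,\dots,A_kx$ are orthonormal (from the anticommutation relations together with $A_i^2=-I$) and tangent to the sphere (skew-symmetry of $A_i$ gives $\langle A_ix,x\rangle=0$), so they define $\rho(N)-1$ everywhere linearly independent vector fields.

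For the upper bound I would first reformulate the problem: $S^{N-1}$ admits $k$ linearly independent vector fields if and only if the projection $V_{N,k}\to S^{N-1}$ from the Stiefel manifold of $k$-frames admits a cross-section. Following James, this sectioning problem is equivalent to a (co)reducibility property of the stunted real projective space $\R P^{N-1}/\R P^{N-k-1}$. The crux is then to detect the obstruction to such (co)reducibility, and this is where the Adams operations enter: one computes the reduced real $K$-theory $\widetilde{KO}$ of the relevant stunted projective spaces and analyses the action of the operations $\psi^t$ on these groups. The incompatibility of the resulting $\psi^t$-eigenvalue data with the existence of a cross-section, once $k\geq\rho(N)$, forces the upper bound and thus matches the construction exactly.

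The main obstacle, and the heart of Adams' argument, is the upper bound: the explicit determination of $\widetilde{KO}$ of stunted projective spaces together with the precise computation of the Adams operations on them. The lower-bound construction is routine linear algebra once the Clifford-algebra bookkeeping is set up, but ruling out a $(\rho(N))$-th vector field requires the full strength of $K$-theoretic obstruction theory; it is this half that elevates the result from an exercise to a landmark theorem, and I would expect to invoke it here rather than reprove it, since none of the preceding material in this paper supplies the needed $K$-theoretic input.
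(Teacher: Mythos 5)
This theorem is not proved in the paper at all---it is quoted verbatim as a classical result of Adams \cite{MR0139178}---and your proposal, which establishes the lower bound by the Hurwitz--Radon construction (anticommuting skew-symmetric orthogonal matrices $A_i$ with $A_i^2=-I$, giving the orthonormal tangent frame $A_1x,\dots,A_kx$ at $x\in S^{N-1}$) and defers the upper bound to Adams' $K$-theoretic argument (James' reduction of the sectioning problem to coreducibility of stunted projective spaces, then Adams operations on $\widetilde{KO}$), is a faithful outline of the original proof and is consistent with the paper's own treatment, which is simply to cite it. The one slip is an off-by-one in the reformulation: $k$ everywhere linearly independent vector fields on $S^{N-1}$ correspond to a cross-section of the projection $V_{N,k+1}\to S^{N-1}$ (orthonormal $(k{+}1)$-frames, the first vector being the base point), not of $V_{N,k}\to S^{N-1}$; this does not affect the substance of the argument.
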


The following generalises Elgindi's theorem \cite[Theorem~2]{1607.02384}. 

\begin{theorem}\label{thm:CR-reg emb}
\footnote{In the published version of this paper, 
the condition ``$n\not\equiv3\bmod4$'' in (c) 
was misstated as ``$n\equiv3\bmod4$''. We have corrected it.}
There does not exist a CR-regular embedding 
of $S^n$ into $\C^q$ 
in each case of the following:
\begin{enumerate}[(a)]
\item
$n\equiv0\bmod2$; 
\item
$4q\geq3n+2$ and $n\ne1,3,7$; 
\item
$2q=n+3>6$ and $n\not \equiv3\bmod4$. 
\end{enumerate}
\end{theorem}

\begin{proof}
\begin{enumerate}[(a)]
\item
This is a direct consequence of Corollary~\ref{cor:even-dim}. 

\item
For $4q\geq3n+2$, 
any embedding of $S^n$ into $\C^q$ has 
trivial normal bundle (Kervaire \cite[Theorem~8.2]{MR0102809}). 
In view of Proposition~\ref{prop:elg} and 
Theorem~\ref{thm:adams}, 
it suffices to show that $2q-n\geq\rho(n+1)$. 

Let $n+1=(2a+1)2^b$ for non-negative integers $a$ and $b$. 
As a result of (a), we may assume that $n$ is odd, that is, $b\geq1$. 
Thus, the condition $4q\geq3n+2$ implies that 
$2q-n\geq\dfrac{n+3}2$. 

If $a\geq1$ we have 
\[
2q-n\geq\dfrac{n+3}2=\dfrac{2a+1}22^b+1\geq2^b+2\geq2b+2\geq\rho(n+1). 
\]
If $a=0$ then we have $b\geq4$ by $n\ne1,3,7$. 
Then if $b=4$, we have $n+1=16$ and hence 
\[
2q-n\geq\dfrac{n+3}2=2^{b-1}+1=9=\rho(16). 
\]
If $a=0$ and $b\geq5$, we see that  
\[
2q-n\geq\dfrac{n+3}2=2^{b-1}+1\geq2b+2\geq\rho(n+1). 
\]

This completes the proof. 

\item
An embedding of $S^n$ into $\R^{n+3}$ has 
trivial normal bundle (Massey \cite[Corollary, p.\,960]{MR0109351}). 
Therefore, by Proposition~\ref{prop:elg}, 
the existence of a CR-regular embedding 
of $S^{4k+1}$ into $\C^{2k+2}$, $k\geq1$, 
would require $S^{4k+1}$ to have 
three linearly independent vector fields 
and hence vanishing real semi-characteristic by 
Atiyah's theorem (see \cite[p.\,108]{MR0287567}). 
This, however, is impossible. 
\end{enumerate}
\end{proof}

\begin{remark}
It would be interesting to know whether 
there exists a CR-regular embedding of $S^n$ 
into $\C^q$ for $(n,q)$ outside the range of 
Theorem~\ref{thm:adams}. 
\end{remark}

\section*{Acknowledgements}
The first-named author has been 
supported in part by the Grant-in-Aid for Young Scientists (B), 
No.~17K14193, Japan Society for the Promotion of Science. 
The second-named author has been 
supported in part by the Grant-in-Aid for Scientific Research (C), 
No.~15K04880, Japan Society for the Promotion of Science. 
%


\end{document}